\newcommand{\places}{\mathsf P}
\newcommand{\dirichlet}{\mathsf d}
\newcommand{\lcm}{\operatorname{lcm}}
\newcommand{\ord}{\operatorname{ord}}
\newcommand{\fix}{\operatorname{\mathsf F}}
\newcommand{\orbit}{\operatorname{\mathsf O}}
\newcommand{\morder}{\ell}
\newcommand{\ch}{\mathrm{char}}
\newcommand{\ass}{\mathrm{Ass}}
\newcommand{\divides}{|}
\newcommand{\smallnotdivides}{\nmid}
\newcommand{\notdivides}{\nmid}
\def\bigo{\operatorname{O}}    
\def\epsilon{\varepsilon}
\renewcommand{\le}{\leqslant}
\renewcommand{\ge}{\geqslant}
\renewcommand{\mid}{\colon}
\newtheorem{theorem}{Theorem}[section]
\newtheorem{proposition}[theorem]{Proposition}
\newtheorem{lemma}[theorem]{Lemma}
\theoremstyle{definition}
\newtheorem{definition}[theorem]{Definition}
\newtheorem{example}[theorem]{Example}
\theoremstyle{remark}
\newtheorem{remark}[theorem]{Remark}
\begin{document}

\bibliographystyle{plain}

\title{Dirichlet series for finite
combinatorial rank dynamics}

\author{G. Everest}
\author{R. Miles}
\author{S. Stevens}
\author{T. Ward}

\dedicatory{Draft \today}
\address{School of Mathematics, University of East
  Anglia, Norwich NR4 7TJ, United Kingdom}
\email{t.ward@uea.ac.uk}

\thanks{This research was supported by E.P.S.R.C. grant EP/C015754/1.}

\subjclass{37C30; 26E30; 12J25}
\renewcommand{\subjclassname}{\textup{2000} Mathematics Subject Classification}
\begin{abstract}
We introduce a class of group
endomorphisms -- those of finite
combinatorial rank -- exhibiting slow orbit growth.
An associated Dirichlet series
is used to obtain an exact orbit counting formula,
and in the connected
case this series is shown to have a closed rational form.
Analytic properties of the Dirichlet
series are related to orbit-growth
asymptotics: depending on the
location of the abscissa of convergence
and the degree of the pole there, various
orbit-growth asymptotics are found,
all of which are polynomially bounded.
\end{abstract}

\maketitle

\tableofcontents

\section{Introduction}

A closed orbit~$\tau$ of length~$\vert\tau\vert=n$ for
a map~$\alpha\colon X\to X$ is a set of the
form
\[
\{x,\alpha(x),\alpha^2(x),\dots,\alpha^n(x)=x\}
\]
with cardinality~$n$ (in our setting~$X$ will always be
a compact metric space and~$\alpha$ a continuous map).
Dynamical analogues of the prime number
theorem concern the asymptotic behavior of quantities like
\begin{equation*}\label{equation:orbitvcounter}
\pi_{\alpha}(N)=\left\vert\{\tau\mbox{ a closed orbit
of~$\alpha$}\mid\vert\tau\vert\le
N\}\right\vert.
\end{equation*}
When~$X$ has a metric structure with respect to which~$\alpha$ is
hyperbolic, results of Parry and
Pollicott~\cite{MR710244},~\cite{MR727704},~\cite{MR1085356} and
others apply to give a precise understanding of the growth
properties of~$\pi_{\alpha}$. Without hyperbolicity less is known:
Waddington~\cite{MR1139101} considered the case of a quasihyperbolic
toral automorphism, and the authors~\cite{emsw} found asymptotics
for connected~$S$-integer systems with~$S$ finite
(these are isometric extensions of hyperbolic
automorphisms of arithmetic origin).

Writing
\begin{equation*}
\fix_{\alpha}(n)=\vert\{x\in X\mid {\alpha}^nx=x\}\vert
\end{equation*}
for the number of points fixed by~${\alpha}^n$, the dynamical zeta
function of~$\alpha$ is defined by
\begin{equation*}\label{equation:dynamicalzetafunction}
\zeta_{{\alpha}}(z)=\exp\sum_{n=1}^{\infty}\frac{z^n}{n}\fix_{\alpha}(n)
\end{equation*}
which has a formal expansion as an Euler product,
\begin{equation*}\label{equation:eulerexpansion}
\zeta_{\alpha}(z)=\prod_{\tau}\left(1-z^{\vert\tau\vert}\right)^{-1},
\end{equation*}
where the product is taken over all closed orbits of~${\alpha}$. In the
hyperbolic and quasihyperbolic settings, the dynamical zeta function
is either rational or has a meromorphic extension beyond its radius
of convergence, allowing analytic methods to be used to obtain
asymptotic estimates for~$\pi_{\alpha}$. For~$S$-integer systems
with~$S$ finite the dynamical zeta function typically has a natural
boundary, so more direct methods have to be used in~\cite{emsw}. In
all these cases the asymptotic takes the form
\[
\pi_{\alpha}(N)\sim \frac{e^{h(\alpha)N}}{N}\Psi(N),
\]
where~$\Psi$ is an explicit almost-periodic function bounded away
from zero and infinity, constant in the hyperbolic cases,
and~$h(\alpha)$ is the topological entropy of~${\alpha}$.
The~$S$-integer systems with~$S$ finite may be viewed as
arithmetical perturbations of (quasi)hyperbolic
systems, and the orbit
growth is still essentially exponential.

Our purpose here is to describe orbit growth for a
class of dynamical systems much further from hyperbolicity:
those of \emph{finite combinatorial rank}.
The dynamical system~$(X,\alpha)$ has
finite combinatorial rank if the sequence of periodic
point counts~$(\fix_\alpha(n))$ has finite rank in a
canonical partially ordered set of integer sequences (see
Definition~\ref{definefcr}).
These systems are found to possess an orbit structure which
allows Dirichlet series to be used directly.
The simplest non-trivial
example of a group automorphism with finite combinatorial rank
is the automorphism~$\alpha$ dual to the map~$x\mapsto2x$
on the localization~$\mathbb Z_{(3)}$ of the integers at
the prime~$3$
(the
periodic point data for this map is the same as
that of the map~$z\mapsto z^2$ on~$\{z\in\mathbb C\mid z^{3^k}=1\mbox{
for some }k\geqslant0\}$).
By~\cite{MR1461206}, this
system has~$\fix_{\alpha}(n)=\vert 2^n-1\vert_3^{-1}$,
and Stangoe~\cite{stangoethesis}
used this to show the asymptotic
\[
\pi_{\alpha}(N)=\frac{1}{\log 3}\log(N)+\bigo(1),
\]
for all~$N\ge1$.

The main results concern an algebraic
system~$(X,\alpha)$ of finite combinatorial
rank, and the number~$\orbit_{\alpha}(n)$ of
closed orbits of length~$n$ under~$\alpha$.
We associate the Dirichlet series
\[
\dirichlet_{\alpha}(z)=
\sum_{n=1}^{\infty}
\frac{\orbit_{\alpha}(n)}{n^z}
\]
to~$(X,\alpha)$ and use this to study
the orbit-growth function
\[
\pi_{\alpha}(N)=\sum_{n\le N}\orbit_{\alpha}(n).
\]
In Theorem~\ref{theorem:exactorbitcountingformula}
we find an exact expression for
the Dirichlet series and use this to
show that, in the connected case,~$\dirichlet_{\alpha}(z)$
is a rational function of the
variables~$\{c^{-z}\mid c\in\mathcal C\}$
where~$\mathcal C$ is a finite set of positive
integers (see Example~\ref{exampleone} for
a simple instance of this rationality result).

The asymptotic behaviour of~$\pi_{\alpha}$
is governed by the abscissa of convergence~$\sigma$
of~$\dirichlet_{\alpha}$ and the order~$K$
of the pole at~$\sigma$.
For the case~$\sigma=0$,
Theorem~\ref{wheniwakeupearlyinthemorning}(1)
shows that
\[
\pi_{\alpha}(N)=C\left(\log
N\right)^K+\bigo\left((\log N)^{{K-1}}\right).
\]
Example~\ref{threecapfive} illustrates this
result.
For~$\sigma>0$ the situation is
more involved. In combinatorial rank
one (in which case the pole at~$\sigma$
is necessarily simple), Theorem~\ref{rank_one_theorem}
shows that
\[
\pi_{\alpha}(N)=\delta(N)N^{\sigma}+\bigo(1)
\]
where~$\delta$ is an explicit
oscillatory function bounded away from zero and infinity.
An instance of this phenomena may be
found in Example~\ref{oscillatorytermexample}.

For the general case of a simple
pole, we show
in Theorem~\ref{wheniwakeupearlyinthemorning}(2)
a Chebychev result of the form
\[
A N^{\sigma}\leqslant\pi_\alpha(N)\leqslant B N^{\sigma}
\]
for constants~$A,B>0$. Given the
oscillatory function that arises
in combinatorial rank one, no stronger
asymptotic can be expected.

Finally, and surprisingly, in the
case~$\sigma>0$ and~$K\ge2$,
Theorem~\ref{wheniwakeupearlyinthemorning}(3)
gives the exact asymptotic
\[
\pi_{\alpha}(N)\sim CN^{\sigma}\left(\log N\right)^{K-1}.
\]
This comes about because the higher-order
pole introduces a factor which
randomizes the oscillatory
behaviour seen in the case of a simple
pole. This is most easily
seen in Example~\ref{theend?}.

Typically, particularly in
analytic number theory, the
route from the singular behaviour
of a Dirichlet series to a counting
function goes via a Tauberian theorem.
However, the presence of singularities
of~$\dirichlet_\alpha$
arbitrarily close together along the
line~$\Re(z)=\sigma$
hampers this approach. Indeed,
the oscillatory term in
Theorem~\ref{rank_one_theorem} is
incompatible with the conclusion
one might hope to reach via Tauberian methods.
In simpler situations like
Example~\ref{exampleone}
(where the singularites on the line~$\Re(z)=\sigma$
are equally spaced), Agmon's
Tauberian theorem~\cite{MR0054079} or
Perron's Theorem~\cite[Th.~13]{MR0185094}
can be applied.
However, it is far easier to obtain
asymptotics directly from the series expression
for~$\dirichlet_\alpha$.

The letter~$C$ is used to denote
various constants
independent of variables~$n,N,x$ and so on;~$\mathbb N_0$
denotes~$\mathbb N\cup\{0\}$.

\section{Finite combinatorial rank}
\label{section:definesthesystems}

Let~$X$ be a compact metrizable abelian group of finite topological
dimension and let~$\alpha$ be a continuous ergodic
epimorphism of~$X$ with finite topological entropy.
Let~$\mathcal{D}$ denote the class of such dynamical
systems~$(X,\alpha)$. For~$(X,\alpha)\in\mathcal{D}$,
Lemma~\ref{counting_formula_lemma} below will show
that~$\fix_{\alpha}(n)$ is finite for all~$n\geqslant 1$.

Consider the ring~$\mathbb{Z}^{\mathbb{N}}$ with the product
ordering~$\preccurlyeq$
(so~$(a_n)\preccurlyeq(b_n)$ if~$a_n\le b_n$ for
all~$n\in\mathbb Z$). For a subset~$A\subset\mathbb Z^{\mathbb
N}$,~$a\in A$ is said to have \emph{finite rank in~$A$} if there is
some~$C>0$ such that all strictly decreasing chains of elements
of~$A$ starting at~$a$ have length at most~$C$.

Pontryagin duality gives a one-to-one correspondence between
elements of~$\mathcal{D}$ and certain countable modules over the
domain~$R=\mathbb{Z}[t]$ as follows.
The additive group~$M=\widehat{X}$ has the
structure of an~$R$-module by identifying~$x\mapsto tx$ with the
map~$\widehat{\alpha}$ and extending in an obvious way to
polynomials. The module~$M$ satisfies the following conditions:
\begin{enumerate}
\item $M$ is countable (since~$X$ is metrizable);
\item the map~$x\mapsto tx$ is a monomorphism of~$M$ (since~$\alpha$
is onto);
\item the set of associated primes~$\ass(M)$ is finite
and consists entirely of non-zero principal ideals, none of which
are generated by cyclotomic polynomials (since~$X$ is
finite-dimensional,~$\alpha$ is ergodic and~$h(\alpha)<\infty$);
\item for each~$\mathfrak{p}\in\ass(M)$,
\[
m(\mathfrak{p})=\dim_{\mathbb{K}(\mathfrak{p})}M_{\mathfrak{p}}<\infty,
\]
where~$\mathbb{K}(\mathfrak{p})$ denotes the field of fractions
of~$R/\mathfrak{p}$ (since $X$ is finite-dimensional and~$h(\alpha)<\infty$).
\end{enumerate}
Conversely, if~$M$ is an~$R$-module satisfying these four properties
and~$\alpha_M$ denotes the epimorphism of~$X_M=\widehat{M}$ dual
to~$x\mapsto tx$ on~$M$, then
\[
(X_M,\alpha_M)\in\mathcal{D}.
\]
Denote the class of all such~$R$-modules~$M$ by~$\widehat{\mathcal{D}}$.

The next lemma gives a formula for the number of periodic points of
an element of~$\mathcal{D}$ in terms of valuations of sequences of
the form
\[
\theta(\mathfrak{p})=(\overline{t}^n-1)_{n\geqslant 1},
\]
where~$\overline{t}$ denotes the image of~$t$ in~$R/\mathfrak{p}$
for some prime ideal~$\mathfrak{p}$ associated to~$M$. In order to describe
this, suppose that~$\mathfrak{p}\subset R$ is a principal prime
ideal so~$\mathbb{K}(\mathfrak{p})$ is a global field with finite
residue class fields.
Let~$\places(\mathfrak{p})$,~$\places_0(\mathfrak{p})$,~$\places_\infty(\mathfrak{p})$
denote the places, finite places and infinite places
of~$\mathbb{K}(\mathfrak{p})$ respectively. In what follows, if~$v$
denotes a normalized additive valuation corresponding to a place
in~$\places_0(\mathfrak{p})$,~$|\cdot|_v$ is defined
by~$|\cdot|_v=|\mathfrak{K}_v|^{-v(\cdot)}$, where~$\mathfrak{K}_v$
is the residue class field. For ease of notation, if~$P$ is any
subset of the set of places of a global field~$K$ then write
\[
|x|_P=\prod_{v\in P}|x|_v,
\]
for~$x\in K$, with the convention that~$|x|_{\varnothing}=1$.
Finally, note that each~$v\in\places_0(\mathfrak{p})$ also induces a
map~$|\cdot|_v\colon\mathbb{K}(\mathfrak{p})^{\mathbb{N}}
\rightarrow\mathbb{Q}^{\mathbb{N}}$,
and~$|\theta(\mathfrak{p})|_v$ is a unit in~$\mathbb{Q}^{\mathbb{N}}$
provided~$\mathfrak{p}$ is not generated by a cyclotomic polynomial.

Write~$\fix_{\alpha}$ for the sequence~$(\fix_{\alpha}(n))_{n\ge1}$,
and for sequences~$a=(a_n)$ and~$b=(b_n)$ write~$ab,a^k$ for the
sequences~$(a_nb_n),(a_n^k)$ respectively.

\begin{lemma}\label{counting_formula_lemma}
Let~$M\in\widehat{\mathcal{D}}$. Then there
exist~$\mathfrak{p}_1,\dots,\mathfrak{p}_r\in\ass(M)$ and sets of
 places~$P_1,\dots,P_r$
with~$P_i\subset\places_0(\mathfrak{p}_i)$,~$1\leqslant i\leqslant r$,
such that
\begin{equation}\label{periodic_point_formula}
\fix_{\alpha_M}=\prod_{i=1}^{r}|\theta(\mathfrak{p}_i)|_{P_i}^{-1}.
\end{equation}
\end{lemma}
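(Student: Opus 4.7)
The plan is to use Pontryagin duality to reduce the count to an algebraic index $|M/(t^n-1)M|$, decompose $M$ by a finite filtration whose subquotients embed in global fields, and identify each factor as a product of local valuations. A character of $M$ is fixed by $\alpha_M^n$ if and only if it vanishes on $(t^n-1)M$, so $\fix_{\alpha_M}(n) = |M/(t^n-1)M|$ whenever the right-hand side is finite. Ergodicity of $\alpha_M$ translates, via duality, to injectivity of multiplication by $t^n-1$ on $M$ for every $n\ge 1$: otherwise some non-zero $m\in M$ would satisfy $t^nm=m$, giving a non-trivial finite $\widehat{\alpha}_M$-orbit in $\widehat{X_M}=M$.

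Using the finiteness of $\ass(M)$ and of each multiplicity $m(\mathfrak{p})$, I construct a filtration
$$0=M_0\subset M_1\subset\cdots\subset M_r=M,\qquad r=\sum_{\mathfrak{p}\in\ass(M)}m(\mathfrak{p}),$$
such that each successive quotient $N_j=M_j/M_{j-1}$ is a torsion-free rank-one module over $R/\mathfrak{q}_j$ for some $\mathfrak{q}_j\in\ass(M)$, with each $\mathfrak{p}\in\ass(M)$ appearing exactly $m(\mathfrak{p})$ times among the $\mathfrak{q}_j$. Such a filtration is produced by iteratively extracting one-dimensional $\mathbb{K}(\mathfrak{p})$-subspaces from the finite-dimensional localizations $M_{\mathfrak{p}}$ and lifting back. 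Applied to each short exact sequence $0\to M_{j-1}\to M_j\to N_j\to 0$, the snake lemma for multiplication by $t^n-1$, combined with the injectivity above, yields
$$\bigl|M/(t^n-1)M\bigr|=\prod_{j=1}^{r}\bigl|N_j/(\bar{t}^n-1)N_j\bigr|.$$

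Each $N_j$ embeds as a rank-one $R/\mathfrak{q}_j$-submodule of $\mathbb{K}(\mathfrak{q}_j)$, and such submodules are classified by their local completions at the finite places, producing an intrinsic set $P_j\subset\places_0(\mathfrak{q}_j)$ for which $|N_j/(\bar{t}^n-1)N_j|=|\bar{t}^n-1|_{P_j}^{-1}$ for all $n$. The product formula for $\mathbb{K}(\mathfrak{q}_j)$ rewrites the natural global-field index (a product over infinite places) as the stated product over finite places, and hypothesis~(3) -- excluding cyclotomic primes -- ensures $\bar{t}^n-1\ne 0$ in $R/\mathfrak{q}_j$ so that each factor is finite. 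Relabelling the pairs $(\mathfrak{q}_j,P_j)$ as $(\mathfrak{p}_i,P_i)$ yields~(\ref{periodic_point_formula}). The main obstacle is the filtration step, since $M$ need not be Noetherian: the standard composition-series argument must be executed inside the finite-dimensional localizations $M_{\mathfrak{p}}$ and transported back to $M$ through its exact-sequence structure. A secondary technical point is verifying that the set $P_j$ attached to $N_j$ is intrinsic -- independent of the embedding $N_j\hookrightarrow\mathbb{K}(\mathfrak{q}_j)$ -- which follows from uniqueness of such embeddings up to multiplication by a unit of $\mathbb{K}(\mathfrak{q}_j)$.
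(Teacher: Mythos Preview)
The paper does not give its own proof here: it simply cites \cite[Th.~1.1]{miles_periodic_points}. Your outline --- Pontryagin duality to identify $\fix_{\alpha_M}(n)$ with the index $|M/(t^n-1)M|$, a filtration of $M$ with rank-one torsion-free subquotients over the domains $R/\mathfrak{q}_j$, multiplicativity of the index via the snake lemma, and then a local computation on each rank-one piece --- is exactly the strategy one expects, and is almost certainly the argument in the cited reference.

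Two points deserve care if you flesh this out. First, the filtration step: for non-Noetherian $M$ you cannot simply invoke a prime filtration of length $\sum_{\mathfrak p}m(\mathfrak p)$; you need to argue (as you indicate) via the finite-dimensional localizations $M_{\mathfrak p}$ and lift back, checking that each subquotient you produce is genuinely torsion-free over the relevant $R/\mathfrak q_j$ so that multiplication by $\bar t^n-1$ is injective there. Second, the local index formula $|N_j/(\bar t^n-1)N_j|=|\bar t^n-1|_{P_j}^{-1}$ for a rank-one submodule $N_j\subset\mathbb K(\mathfrak q_j)$ requires identifying $P_j$ as the set of finite places at which $N_j$ is bounded and then a local--global index computation; this is standard adelic bookkeeping but not a one-line consequence of the product formula. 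Your remark that $P_j$ is well-defined because any two embeddings differ by a unit of $\mathbb K(\mathfrak q_j)$ is correct and is the right way to handle that issue.
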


\begin{proof}
This is shown in~\cite[Th.~1.1]{miles_periodic_points}.
\end{proof}

For~$M\in\widehat{\mathcal{D}}$, set
\[
[M] = \{\fix_{\alpha_L}\mid L\mbox{ is a submodule of }
M\}\subset\mathbb{Z}^{\mathbb{N}}.
\]
Lemma~\ref{counting_formula_lemma} and the next result together show
how~$[M]$ relates to the simpler constituent sets of sequences of
the form~$[\mathbb{K}(\mathfrak{p})]$. The members of a set
like~$[\mathbb{K}(\mathfrak{p})]$ are sequences of periodic points
for~$S$-integer systems.
Write~$2^P$ for the set of subsets of~$P$.

\begin{lemma}\label{sets_of_places_to_sequences_lemma}
Let~$\mathfrak{p}\subset R$ be a principal prime ideal which is not
generated by a cyclotomic polynomial, and set
\[
P=\{v\in\places_0(\mathfrak{p})\mid\left\vert
R/\mathfrak{p}\right\vert_v\mbox{ is bounded}\}.
\]
Then~$[\mathbb{K}(\mathfrak{p})]$ is the image of the
map~$\psi\colon 2^P\rightarrow\mathbb{Z}^{\mathbb{N}}$
where
\[
\psi(Q)=\vert\theta(\mathfrak{p})\vert^{-1}_Q.
\]
\end{lemma}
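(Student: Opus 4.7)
The plan is to prove the set equality $[\mathbb K(\mathfrak p)] = \psi(2^P)$ by establishing both inclusions, using Lemma \ref{counting_formula_lemma} as the fundamental tool together with an explicit construction of submodules indexed by $2^P$.

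To show $\psi(2^P) \subset [\mathbb K(\mathfrak p)]$, I would exhibit, for each $Q \subset P$, an explicit submodule of $\mathbb K(\mathfrak p)$ realising the sequence $\psi(Q)$. A natural candidate is
\[
L_Q = \{x \in \mathbb K(\mathfrak p) : |x|_v \leq 1 \text{ for every } v \in P \setminus Q\}.
\]
Since $|\bar t|_v \leq 1$ throughout $P$ by the very definition of $P$, the set $L_Q$ is closed under multiplication by $\bar t$ and is therefore an $R$-submodule. One can identify $L_Q$ with an $S$-integer subring of the global field $\mathbb K(\mathfrak p)$, from which the four conditions placing $L_Q$ in $\widehat{\mathcal D}$ are routine to verify. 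Applying Lemma \ref{counting_formula_lemma} to $L_Q$, or equivalently the direct periodic point formula for $S$-integer dynamical systems in \cite{MR1461206}, then produces $\fix_{\alpha_{L_Q}} = |\theta(\mathfrak p)|_Q^{-1} = \psi(Q)$, as required.

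For the reverse inclusion, let $L \leq \mathbb K(\mathfrak p)$ be any nonzero submodule with $L \in \widehat{\mathcal D}$. Because $\mathbb K(\mathfrak p)$ is the fraction field of the domain $R/\mathfrak p$, every nonzero $R$-submodule inherits $\{\mathfrak p\}$ as its unique set of associated primes. Consequently Lemma \ref{counting_formula_lemma} specialises to a single factor, yielding
\[
\fix_{\alpha_L} = |\theta(\mathfrak p)|_Q^{-1}
\]
for some $Q \subset \places_0(\mathfrak p)$. To conclude that $\fix_{\alpha_L} \in \psi(2^P)$ it suffices to verify $Q \subset P$.

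The main obstacle is this last containment. A putative bad place $v \in Q \setminus P$ would satisfy $v(\bar t) < 0$, whence the ultrametric inequality forces $v(\bar t^n-1) = nv(\bar t)$ and the corresponding factor $|\bar t^n-1|_v^{-1} = |\mathfrak K_v|^{nv(\bar t)}$ decays exponentially in $n$. The cleanest way to rule out such $v$ is to appeal directly to the proof of \cite[Th.~1.1]{miles_periodic_points}: the set $Q$ produced there is built from the supports of torsion in $L$ with respect to an order in $R/\mathfrak p$, and the hypothesis $L \in \widehat{\mathcal D}$ (finite topological dimension of $\widehat L$) forces every such support place to lie in $P$. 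A self-contained argument is also possible but less clean, essentially using the exponential decay together with the integrality and positivity of $\fix_{\alpha_L}(n)$, taking care to control potential cancellations between places of the same residue characteristic by isolating $v$-adic contributions in a finitely-generated submodule of $L$ and then applying the construction of $L_Q$ above in reverse.
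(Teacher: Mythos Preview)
Your overall plan matches the paper's: one inclusion is deferred to the proof of \cite[Th.~1.1]{miles_periodic_points}, and for the other you exhibit, for each $Q\subset P$, an explicit $R$-submodule of $\mathbb K(\mathfrak p)$ realising $\psi(Q)$. However, your explicit construction is inverted. With
\[
L_Q=\{x\in\mathbb K(\mathfrak p)\mid |x|_v\le 1\mbox{ for all }v\in P\setminus Q\},
\]
one has $L_Q=R_S$ for $S=(\places_0(\mathfrak p)\setminus P)\cup Q$, and the $S$-integer formula together with the product formula gives
\[
\fix_{\alpha_{L_Q}}(n)=\prod_{w\in S\cup\places_\infty(\mathfrak p)}|\bar t^{\,n}-1|_w
=\prod_{w\in P\setminus Q}|\bar t^{\,n}-1|_w^{-1}=\psi(P\setminus Q),
\]
not $\psi(Q)$. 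A quick check in the case $\mathbb K(\mathfrak p)=\mathbb Q$, $\bar t=2$, $Q=\{3\}$ confirms this: your $L_Q=\mathbb Z[\tfrac13]$ has $\fix(n)=(2^n-1)|2^n-1|_3$, whereas $\psi(\{3\})(n)=|2^n-1|_3^{-1}$.

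The paper instead takes, for $Q\neq\varnothing$, the module $L=\bigcap_{v\in Q}R_v=\{x\mid |x|_v\le 1\mbox{ for all }v\in Q\}$ (and $L=\mathbb K(\mathfrak p)$ when $Q=\varnothing$), which does give $\fix_{\alpha_L}=\psi(Q)$. Your error is harmless for the set equality itself, since $Q\mapsto P\setminus Q$ is a bijection of $2^P$ and so the images coincide; but the specific assertion $\fix_{\alpha_{L_Q}}=\psi(Q)$ is false and should be corrected. With that fix, your argument and the paper's are essentially the same.
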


\begin{proof}
The proof of~\cite[Th.~1.1]{miles_periodic_points}
shows
that~$[\mathbb{K}(\mathfrak{p})]\subset\psi(2^P)$, so it remains to
show the reverse inclusion. Fix a subset~$Q\subset P$.
If~$Q=\varnothing$, set~$L=\mathbb{K}(\mathfrak{p})$, otherwise
set~$L=\bigcap_{v\in Q}R_v$, where~$R_v$ is the discrete valuation
ring of~$\mathbb{K}(\mathfrak{p})$ corresponding to~$v$. Then~$L$ is
a submodule of~$\mathbb{K}(\mathfrak{p})$
with~$\fix_{\alpha_L}=\psi(Q)$ by construction.
\end{proof}

It follows that
\begin{equation}\label{canonical_inclusion_of_ordered_sequences}
[M]\subset\prod_{\mathfrak{p}\in\ass(M)}[\mathbb{K}(\mathfrak{p})]^{m(\mathfrak{p})}
\end{equation}
and each~$[\mathbb{K}(\mathfrak{p})]$ has a description as in
Lemma~\ref{sets_of_places_to_sequences_lemma}.

\begin{proposition}
For any~$R$-module~$M\in\widehat{\mathcal{D}}$ the
following holds.
\begin{enumerate}
\item For each~$\mathfrak{p}\in\ass(M)$, the
set~$[\mathbb{K}(\mathfrak{p})]$ contains a greatest element
\[
s(\mathfrak{p})=\fix_{\alpha_{R/\mathfrak{p}}}.
\]
Furthermore, for any~$a\in
[\mathbb{K}(\mathfrak{p})]$,~$a^{-1}s(\mathfrak{p})\in
[\mathbb{K}(\mathfrak{p})]$.
\item The set~$[M]$ contains a greatest element
\[
\fix_{\alpha_L}=\prod_{\mathfrak{p}\in\ass(M)}s(\mathfrak{p})^{m(\mathfrak{p})},
\]
given by a Noetherian submodule~$L\subset M$.
\end{enumerate}
\end{proposition}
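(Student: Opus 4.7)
The plan for part~(1) is to exploit the boolean-lattice description of $[\mathbb{K}(\mathfrak{p})]$ given by Lemma~\ref{sets_of_places_to_sequences_lemma}. The defining condition of $P$ --- boundedness of $|R/\mathfrak{p}|_v$ --- forces $|\overline{t}|_v\leqslant 1$ for every $v\in P$ (since the powers of $\overline{t}$ lie in $R/\mathfrak{p}$), whence by the ultrametric inequality $|\overline{t}^n-1|_v\leqslant 1$ for all $n\geqslant 1$. Consequently every coordinate of $|\theta(\mathfrak{p})|_v^{-1}$ is at least~$1$, so the map $Q\mapsto\psi(Q)=\prod_{v\in Q}|\theta(\mathfrak{p})|_v^{-1}$ is monotone increasing in $Q$, identifying $s(\mathfrak{p}):=\psi(P)$ as the greatest element of $[\mathbb{K}(\mathfrak{p})]$. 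Tracing the construction in the proof of Lemma~\ref{sets_of_places_to_sequences_lemma} for $Q=P$ with the submodule $R/\mathfrak{p}\subset\mathbb{K}(\mathfrak{p})$ realises this maximum, giving $s(\mathfrak{p})=\fix_{\alpha_{R/\mathfrak{p}}}$. The ``furthermore'' clause then follows from the multiplicativity identity $\psi(Q)\psi(P\setminus Q)=\psi(P)=s(\mathfrak{p})$ (using the disjointness of $Q$ and $P\setminus Q$): if $a=\psi(Q)$ for some $Q\subset P$, then $a^{-1}s(\mathfrak{p})=\psi(P\setminus Q)\in\psi(2^P)=[\mathbb{K}(\mathfrak{p})]$.

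For part~(2), the upper bound is immediate from~\eqref{canonical_inclusion_of_ordered_sequences}: every element of $[M]$ factors as $\prod_{\mathfrak{p}\in\ass(M)}\prod_{j=1}^{m(\mathfrak{p})}a_{\mathfrak{p},j}$ with $a_{\mathfrak{p},j}\in[\mathbb{K}(\mathfrak{p})]$, and by part~(1) each factor satisfies $a_{\mathfrak{p},j}\preccurlyeq s(\mathfrak{p})$ componentwise, giving the bound $\prod_\mathfrak{p} s(\mathfrak{p})^{m(\mathfrak{p})}$. To realise this bound I would, for each $\mathfrak{p}\in\ass(M)$, choose elements $x_{\mathfrak{p},1},\dots,x_{\mathfrak{p},m(\mathfrak{p})}\in M$ with $\ann(x_{\mathfrak{p},i})=\mathfrak{p}$ whose images form a $\mathbb{K}(\mathfrak{p})$-basis of the finite-dimensional vector space $M_\mathfrak{p}$, and set $L=\sum_{\mathfrak{p},i}R x_{\mathfrak{p},i}$. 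Since $R=\mathbb{Z}[t]$ is Noetherian and $L$ is finitely generated, $L$ is Noetherian; moreover $\ass(L)=\ass(M)$ (since $x_{\mathfrak{p},1}\in L$ has annihilator $\mathfrak{p}$) and $m_L(\mathfrak{p})=m_M(\mathfrak{p})$ for each $\mathfrak{p}$ (since $L_\mathfrak{p}\subset M_\mathfrak{p}$ contains the chosen basis).

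The principal obstacle is verifying that $\fix_{\alpha_L}$ actually attains the product $\prod_\mathfrak{p} s(\mathfrak{p})^{m(\mathfrak{p})}$ rather than some smaller combination produced by Lemma~\ref{counting_formula_lemma}, since the sets $P_i$ appearing there depend on the internal structure of $L$ and not merely on its associated primes and multiplicities. The cleanest route is to refine the choice of the $x_{\mathfrak{p},i}$ by a prime-avoidance or general-position argument --- at each stage selecting a representative that avoids the submodule already constructed --- so that the sum $L$ is in fact the internal direct sum, giving an isomorphism $L\cong\bigoplus_{\mathfrak{p}\in\ass(M)}(R/\mathfrak{p})^{m(\mathfrak{p})}$. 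Once this decomposition is established, applying Lemma~\ref{counting_formula_lemma} to $L$ splits the periodic-point sequence across the summands, each cyclic summand $R/\mathfrak{p}$ contributing the maximal factor $s(\mathfrak{p})$ by part~(1), and multiplying yields the claimed formula.
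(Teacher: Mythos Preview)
Your argument for part~(1) is correct and coincides with the paper's, which likewise derives everything from the identity $\psi(Q)\psi(P\setminus Q)=\psi(P)$ together with the identification $\psi(P)=\fix_{\alpha_{R/\mathfrak{p}}}$.

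For part~(2) the upper bound via~\eqref{canonical_inclusion_of_ordered_sequences} is fine, but your construction of~$L$ has a real gap. You ask for elements $x_{\mathfrak{p},1},\dots,x_{\mathfrak{p},m(\mathfrak{p})}\in M$ with $\ann(x_{\mathfrak{p},i})=\mathfrak{p}$ whose images form a $\mathbb{K}(\mathfrak{p})$-basis of~$M_{\mathfrak{p}}$, and then hope to arrange that $\sum Rx_{\mathfrak{p},i}$ is an internal direct sum. The obstruction is not directness but \emph{existence}: such elements need not be available. Take $M=R/\mathfrak{p}^2$ for a principal prime $\mathfrak{p}=(f)$ (with $f$ chosen so that $M\in\widehat{\mathcal{D}}$). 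Here $m(\mathfrak{p})=2$, yet the set $\{x\in M:\mathfrak{p}x=0\}$ is precisely $\mathfrak{p}/\mathfrak{p}^2\cong R/\mathfrak{p}$, a single cyclic copy whose image in $M_\mathfrak{p}$ spans only a line. No prime-avoidance or general-position manoeuvre can produce a second independent element with annihilator~$\mathfrak{p}$, so there is no embedding $(R/\mathfrak{p})^{2}\hookrightarrow M$ and the direct-sum route collapses.

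The paper sidesteps this by asking for a prime \emph{filtration} rather than a direct-sum decomposition: one exhibits a Noetherian submodule $L\subset M$ with a chain $0=L_0\subset L_1\subset\cdots\subset L_r=L$ in which each successive quotient is $R/\mathfrak{p}_i$, each $\mathfrak{p}\in\ass(M)$ occurs exactly $m(\mathfrak{p})$ times, and no other primes appear. Such an $L$ always exists (in the example above one simply takes $L=M$), and the periodic-point count is multiplicative along the filtration, yielding $\fix_{\alpha_L}=\prod_{\mathfrak{p}} s(\mathfrak{p})^{m(\mathfrak{p})}$ as required. Replacing your direct-sum step by this filtration argument repairs the proof.
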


\begin{proof}
(1) This follows from the description given by
Lemma~\ref{sets_of_places_to_sequences_lemma}: The
sequence~$\psi(P)=\fix_{\alpha_{R/\mathfrak{p}}}$ is the greatest
element of~$[\mathbb{K}(\mathfrak{p})]$ and for any
subset~$Q\subset P$, we
have~$\psi(Q)\psi(P\setminus Q)=\psi(P)$.

(2) Let
\[
s=\prod_{\mathfrak{p}\in\ass(M)}s(\mathfrak{p})^{m(\mathfrak{p})},
\]
then~(1) and Lemma~\ref{counting_formula_lemma} show
that~$\fix_{\alpha_M}\preccurlyeq s$. The result follows by
observing that there is a Noetherian submodule~$L\subset M$ with a
prime filtration in which each of the
primes~$\mathfrak{p}\in\ass(M)$ appears with
multiplicity~$m(\mathfrak{p})$ and no other primes appear. It
follows that~$\fix_{\alpha_L}=s$.
\end{proof}

\begin{example}
If~$\alpha_M$ is a quasihyperbolic toral epimorphism
then~$\fix_{\alpha_M}$ is the greatest element of~$[M]$. More
generally, for any Noetherian
module~$M\in\widehat{\mathcal{D}}$,~$\fix_{\alpha_M}$ is the
greatest element of~$[M]$.
\end{example}

The next result gives a convenient characterization of the finite-rank
sequences in a poset of the form~$[\mathbb{K}(\mathfrak{p})]$;
this will be useful when considering dynamical systems of finite
combinatorial rank in the next section.

\begin{theorem}\label{finite_rank_to_finite_places_theorem}
Let~$\mathfrak{p}\subset R$ be a principal prime ideal which is not
generated by a cyclotomic polynomial and
let~$a\in[\mathbb{K}(\mathfrak{p})]$. Then~$a$ has finite rank if
and only if there is a finite set of
places~$Q\subset\places_0(\mathfrak{p})$ such
that~$|\theta(\mathfrak{p})|_Q^{-1}=a$.
\end{theorem}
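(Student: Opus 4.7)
The plan is to use the explicit parametrisation of $[\mathbb{K}(\mathfrak{p})]$ provided by Lemma~\ref{sets_of_places_to_sequences_lemma}: every $a\in[\mathbb{K}(\mathfrak{p})]$ equals $\psi(Q_0)=|\theta(\mathfrak{p})|_{Q_0}^{-1}$ for some $Q_0\subset P$, where $P$ is the set of finite places at which $R/\mathfrak{p}$ is bounded. The theorem then amounts to showing that $a$ has finite rank in $[\mathbb{K}(\mathfrak{p})]$ if and only if some representing set of places $Q_0$ (equivalently $Q\subset\places_0(\mathfrak{p})$) can be chosen finite. Throughout I will exploit two consequences of the non-cyclotomic hypothesis: first, that $\xi=\overline{t}$ is not a root of unity, so the multiplicative order $\ord_v(\xi)$ of $\xi$ modulo any $v\in P$ with $v(\xi)=0$ is a well-defined positive integer; and second, that for such $v$ there exists some $n$ (e.g.\ $n=\ord_v(\xi)$) at which $v(\xi^n-1)>0$, so the contribution of a single place to $\psi$ is genuinely non-trivial.

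For the direction $(\Rightarrow)$, I argue contrapositively. If no finite set $Q$ represents $a$, then the representing set $Q_0$ from Lemma~\ref{sets_of_places_to_sequences_lemma} must be infinite, and I can enumerate distinct places $v_1,v_2,\dots\in Q_0$ (first discarding the places where $v(\xi)>0$, which contribute trivially and may be assumed absent). Setting $Q_k=Q_0\setminus\{v_1,\dots,v_k\}$, monotonicity of $\psi$ under inclusion gives $\psi(Q_0)\succcurlyeq\psi(Q_1)\succcurlyeq\cdots$, and strict inequality at each step is witnessed at the index $n_k=\ord_{v_k}(\xi)$, where removing $v_k$ strictly decreases the value by a factor of $|\mathfrak{K}_{v_k}|^{v_k(\xi^{n_k}-1)}\geq|\mathfrak{K}_{v_k}|>1$. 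This produces an infinite strictly decreasing chain in $[\mathbb{K}(\mathfrak{p})]$ beginning at $a$, contradicting finite rank.

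For the direction $(\Leftarrow)$, suppose $a=|\theta(\mathfrak{p})|_Q^{-1}$ for a finite $Q\subset\places_0(\mathfrak{p})$. Since $a$ is integer-valued, any $v\in Q$ with $v(\xi)<0$ is forbidden, and any $v\in Q$ with $v(\xi)>0$ contributes the trivial factor $1$ and may be discarded; so I may assume $Q\subset P$ consists of places with $v(\xi)=0$, still finite. The goal is to prove that the principal downset of $a$ in $[\mathbb{K}(\mathfrak{p})]$ is finite, since then chain lengths are obviously bounded. By Lemma~\ref{sets_of_places_to_sequences_lemma} every element of this downset has the form $\psi(Q')$ with $Q'\subset P$, so it suffices to show the following key claim: if $\psi(Q')\preccurlyeq\psi(Q)$, then (after removing trivial places) $Q'\subset Q$. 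Granting this, the downset has at most $2^{|Q|}$ elements and so any chain from $a$ has length at most $2^{|Q|}$, proving finite rank.

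The main obstacle is proving this containment claim. The difficulty is that at the obvious test index $n^*=\ord_{v^*}(\xi)$ for a putative $v^*\in Q'\setminus Q$, the contribution $|\mathfrak{K}_{v^*}|^{v^*(\xi^{n^*}-1)}\geq|\mathfrak{K}_{v^*}|$ to $\psi(Q')(n^*)$ could in principle be matched by contributions from places $v\in Q$ with $\ord_v(\xi)\mid n^*$. The plan is to defeat this possibility by choosing $n$ as a suitable multiple of $n^*$: specifically, take $n=n^*\cdot\ell$ where $\ell$ is a rational prime chosen (via Dirichlet's theorem on primes in arithmetic progressions, or simply by cofiniteness) to avoid the finite set of ``forbidden'' residues determined by the orders $\{\ord_v(\xi):v\in Q\}$, so that no new $v\in Q$ has its order newly dividing $n$. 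Because $v^*(\xi^n-1)$ is a non-decreasing function of $n$ along multiples of $n^*$ (and grows by lifting-the-exponent contributions), and because $\psi(Q)(n)$ remains controlled by the finitely many $v\in Q$ whose orders divide $n^*$, one can arrange that $\psi(Q')(n)>\psi(Q)(n)$, giving the required contradiction. This last quantitative estimate, using lifting-the-exponent style bounds on $v(\xi^n-1)$, is the technical heart of the argument.
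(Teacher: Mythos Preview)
Your forward direction $(\Rightarrow)$ is correct and essentially identical to the paper's: an infinite set of non-trivial representing places yields an infinite strictly descending chain below~$a$.

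The reverse direction has a genuine gap. Your key claim---that $\psi(Q')\preccurlyeq\psi(Q)$ forces $Q'\subset Q$ once trivial places are discarded---is false. Take a number field $\mathbb{K}(\mathfrak{p})$ with $\xi=\overline{t}\in\mathbb{Q}$ and two distinct places $v,w$ above the same rational prime which are swapped by a Galois automorphism fixing~$\xi$ (for instance $\mathbb{K}=\mathbb{Q}(i)$, $\xi=3$, and the two places above~$5$). Then $|\xi^n-1|_v=|\xi^n-1|_w$ for every~$n$, so $\psi(\{w\})=\psi(\{v\})$ while $\{w\}\not\subset\{v\}$ and both places are non-trivial. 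Your proposed separation argument cannot succeed for an \emph{arbitrary} $v^*\in Q'\setminus Q$: if $v^*$ shares its residue characteristic with some $v\in Q$, then multiplying $n^*$ by powers of~$\ch(\mathfrak{K}_{v^*})$ makes both contributions grow at comparable rates, while multiplying by any other prime~$\ell$ makes neither grow. There is then no index at which $|\theta_n|_{v^*}^{-1}$ can be forced to exceed $|\theta_n|_Q^{-1}$, so the ``lifting-the-exponent'' step you defer to the end cannot be completed as stated.

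The paper avoids this by proving a weaker (but sufficient) statement: there are only finitely many $Q'\subset\widetilde{P}$ with $\psi(Q')\preccurlyeq a$. The extra ingredient is that a number field has only finitely many places above any given rational prime. Hence, if there were infinitely many such~$Q'$, one of them would have to contain a place~$v$ whose residue characteristic $q=\ch(\mathfrak{K}_v)$ lies \emph{outside} the finite set $S=\{\ch(\mathfrak{K}_w):w\in Q\}$. For that particular~$Q'$ one takes $n(r)=\ell_v\,q^r\prod_{p\in S}p^E$: then $|\theta_{n(r)}|_v^{-1}$ grows without bound in~$r$ by Lemma~\ref{fundamental_evaluation_lemma}, while $a_{n(r)}$ stays bounded because $\ord_p(n(r))$ is constant in~$r$ for every $p\in S$. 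This yields the contradiction. (A separate but parallel argument handles $\ch(\mathbb{K}(\mathfrak{p}))>0$, which your sketch does not address.) Your overall plan---bound the downset of~$a$---is sound, but the route via the containment claim must be replaced by this residue-characteristic argument.
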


The proof of Theorem~\ref{finite_rank_to_finite_places_theorem}
requires the following lemma, taken
from~\cite[Lem.~4.9]{miles_zeta}. Recall that~$\mathfrak{K}_v$
denotes the residue class field of a place~$v$, write~$\morder_v$
for the multiplicative order of the image of~$t$
in~$\mathfrak{K}_v^\times$ and $r_v$ for the
(absolute) residue degree.

\begin{lemma}\label{fundamental_evaluation_lemma}
Let~$\mathfrak{p}\subset R$ be a principal prime ideal which is not
generated by a cyclotomic polynomial.
Let~$v\in\places_0(\mathfrak{p})$ have the property
that~$|\overline{t}|_v=1$ and let~$p=\ch(\mathfrak{K}_v)$. Then
there exist constants~$D\geqslant 1$, $E\geqslant 0$ such that
\[
|\theta(\mathfrak{p})_n|^{-1}_v = \left\{
\begin{array}{ll}
1 & \textrm{ if } \morder_v\notdivides n, \\
Dp^{r_v\ord_p(n)} & \textrm{ if } \morder_v\divides n,
\ch(\mathbb{K}(\mathfrak{p}))=0
\mbox{ and } \ord_p(n)\geqslant E,\\
|\theta(\mathfrak{p})_{\morder_v}|_v^{-\vert n \vert_p^{-1}} &
\textrm{ if } \morder_v\divides n \mbox{ and }
\ch(\mathbb{K}(\mathfrak{p}))>0.
\end{array}
\right.
\]
\end{lemma}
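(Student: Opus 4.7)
The plan is to work in the completion of $\mathbb{K}(\mathfrak{p})$ at $v$ and compute $v(\bar{t}^n - 1)$ directly, splitting on whether $\morder_v$ divides $n$ and on the characteristic of $\mathbb{K}(\mathfrak{p})$. The case $\morder_v \notdivides n$ is immediate: by the definition of $\morder_v$, the reduction of $\bar{t}^n - 1$ modulo $\mathfrak{m}_v$ is nonzero in $\mathfrak{K}_v$, so $|\bar{t}^n - 1|_v = 1$. When $\morder_v \divides n$, write $n = \morder_v m$ and set $u = \bar{t}^{\morder_v}$; then $w := u - 1$ has $v(w) \geq 1$, and the task reduces to computing $v(u^m - 1)$ as a function of $m$.

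In the positive-characteristic case $\ch(\mathbb{K}(\mathfrak{p})) = p$, the Frobenius identity $(1+w)^{p^k} = 1 + w^{p^k}$ yields $v(u^{p^k} - 1) = p^k v(w)$ exactly. Writing $m = p^k m'$ with $\gcd(m', p) = 1$, the integer $m'$ is a nonzero element of $\mathbb{F}_p \subset \mathbb{K}(\mathfrak{p})$ and hence a $v$-unit, so expanding $(1 + w^{p^k})^{m'} - 1$ shows the lowest-valuation term is $m' w^{p^k}$ and gives $v(u^m - 1) = p^k v(w)$. Since $\morder_v$ is coprime to $p$, one has $|n|_p^{-1} = p^k$, and converting via $|\cdot|_v = |\mathfrak{K}_v|^{-v(\cdot)}$ produces exactly the third case of the formula.

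In characteristic zero, I plan to use the $p$-adic logarithm and exponential on $1$-units. Once $v(u^{p^k} - 1)$ exceeds the threshold $e_v/(p-1)$, where $e_v = v(p)$ is the absolute ramification index of $v$, $\log$ and $\exp$ are mutually inverse on the corresponding sub-multiplicative group, and the recursion simplifies to
\[
v\bigl(u^{p^{k+1}} - 1\bigr) = v(p) + v\bigl(u^{p^k} - 1\bigr) = v\bigl(u^{p^k} - 1\bigr) + e_v.
\]
Let $E$ be the least $k$ at which the threshold is crossed. For $k \geq E$ this gives $v(u^{p^k} - 1) = v(u^{p^E} - 1) + (k-E)e_v$, and any further $p$-coprime factor $m'$ of $m = n/\morder_v$ is a $v$-unit for which the identity $v((1+y)^{m'} - 1) = v(m' \log(1+y)) = v(y)$ holds in this regime, leaving valuations unchanged. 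Combined with $\ord_p(n) = \ord_p(m)$, one obtains $v(\bar{t}^n - 1) = v(u^{p^E} - 1) + (\ord_p(n) - E) e_v$, and the conversion via $|\cdot|_v = |\mathfrak{K}_v|^{-v(\cdot)}$ yields the stated form $D p^{r_v \ord_p(n)}$ after absorbing the threshold contribution into the constant $D$.

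The main obstacle is the sub-threshold behaviour in the characteristic-zero case: for small $k$ the valuation $v(u^{p^k}-1)$ grows irregularly, because in $(1+w)^p - 1 = pw + \binom{p}{2}w^2 + \dots + w^p$ the term $w^p$ may dominate $pw$ when $v(w) < e_v/(p-1)$. One must verify that iterating $w \mapsto (1+w)^p - 1$ eventually reaches the stable regime in finitely many steps; the constant $E$ in the statement encodes exactly this depth, and its finiteness follows from a careful binomial analysis showing that each step increases $v$ by at least $\min(e_v, (p-1)v(w))$, the second quantity growing strictly until the first bound takes over permanently.
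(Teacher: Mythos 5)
Your overall strategy is sound, and since the paper gives no argument for this lemma at all (it simply cites~\cite{miles_zeta}), a self-contained valuation computation like yours is the natural route. The case $\morder_v\notdivides n$ and the positive-characteristic case are complete: Frobenius gives $v(\bar{t}^{\morder_v p^k}-1)=p^k\,v(\bar{t}^{\morder_v}-1)$ exactly, the prime-to-$p$ part of the exponent leaves the valuation unchanged because $m'y$ is the unique term of minimal valuation in $(1+y)^{m'}-1$, and $\gcd(\morder_v,p)=1$ gives $\ord_p(n)=k$, which is precisely the third line of the lemma. In characteristic zero your recursion is also correct: above the threshold each extra factor $p$ in the exponent increases the normalized ($\mathbb{Z}$-valued) valuation by exactly $v(p)=e_v$, the absolute ramification index, and the pre-stable range is absorbed into a finite $E$. (Incidentally, no $\log$/$\exp$ is needed: once $v(y)>e_v/(p-1)$ the term $py$ is the unique minimal-valuation term of $(1+y)^p-1$, so the ultrametric equality gives $v((1+y)^p-1)=v(y)+e_v$ directly, and the prime-to-$p$ step works below the threshold as well.)

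The problem is your last conversion. From your own formula $v(\bar{t}^n-1)=v(u^{p^E}-1)+(\ord_p(n)-E)\,e_v$ and the paper's normalization $|x|_v=|\mathfrak{K}_v|^{-v(x)}$ with $|\mathfrak{K}_v|=p^{r_v}$, you obtain $|\theta(\mathfrak{p})_n|_v^{-1}=D\,p^{r_v e_v\ord_p(n)}$, not $D\,p^{r_v\ord_p(n)}$; the discrepancy $p^{r_v(e_v-1)\ord_p(n)}$ grows with $n$ and cannot be ``absorbed into the constant $D$''. The two expressions agree only when $v$ is unramified over $p$, equivalently when $r_v$ is interpreted as the local degree $[\mathbb{K}_v:\mathbb{Q}_p]$ rather than the residue degree as defined in the paper. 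A concrete check: take $\mathfrak{p}=(t^2-t-1)$, so $\bar{t}$ is the golden ratio, and let $v$ be the ramified place over $p=5$, with $e_v=2$, $r_v=1$, $\morder_v=4$; then $v(\bar{t}^4-1)=1$ and $|\bar{t}^{4\cdot 5^k}-1|_v^{-1}=5\cdot 25^{k}$, whereas the second line of the displayed formula with $r_v=1$ would predict $D\cdot 5^{k}$. So you must either add the hypothesis that $v$ is unramified over $p$ (true in all the paper's examples), or read $r_v$ as the local degree and say so explicitly; as written, your final identification with the stated form does not follow from the computation you carried out.
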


\begin{proof}[Proof of Theorem \ref{finite_rank_to_finite_places_theorem}]
Let~$\mathbb{K}=\mathbb{K}(\mathfrak{p})$,~$\theta=\theta(\mathfrak{p})$
and let~$P$,~$\psi$ be as in Lemma
\ref{sets_of_places_to_sequences_lemma}. For any~$Q\subset P$, set
\[
\widetilde{Q}=\{v\in Q\mid |\theta|^{-1}_v\neq 1\}.
\]
Notice that if~$|\overline{t}|_v<1$ then~$|\theta|_v=1$, so
$|\overline{t}|_v=1$ for all~$v\in\widetilde{P}$.

Suppose that~$a$ has finite combinatorial rank and that~$Q\subset P$
satisfies~$\psi(Q)=a$. Assume~$\widetilde{Q}$ is infinite. By choosing a
strictly decreasing chain
\[
\widetilde{Q}=Q_1\supset Q_2\supset\cdots,
\]
it follows that
\[
a=\psi(Q_1)\succ\psi(Q_2)\succ\cdots,
\]
contradicting the
assumption that~$a$ has finite
combinatorial rank.
Hence~$\widetilde{Q}$ is finite and satisfies~$\psi(\widetilde{Q})=a$.

Conversely, suppose there is a finite set~$Q\subset P$
with~$\psi(Q)=a$. Without loss of generality, assume
that~$Q=\widetilde{Q}$. If there are only finitely many
distinct~$Q'\subset \widetilde{P}$ with~$\psi(Q')\preccurlyeq a$,
the required result follows. Hence, for a contradiction assume there
are infinitely many such~$Q'$.

First suppose that~$\ch(\mathbb{K})=0$. Then the set of rational
primes
\[
S=\{\ch(\mathfrak{K}_v)\mid v\in Q\}
\]
is finite. Furthermore, using
Lemma~\ref{fundamental_evaluation_lemma}, there exist positive
constants~$c,D,E$ such that for each~$n\geqslant 1$
with~$\min_{p\in S}\{\ord_p(n)\}\geqslant E$,
\begin{equation}\label{finite_rank_equivalence_zero_char_bounding_equation}
a_n=|\theta_n|^{-1}_Q\leqslant Dc^{j(n)},
\end{equation}
where~$j(n)=\max\{\ord_p(n)\mid p\in S\}$.

There are only finitely many~$v\in\places_0(\mathbb{K})$
with~$\ch(\mathfrak{K}_v)$ equal to a given prime. So, since there
are infinitely many distinct~$Q'\subset \widetilde{P}$
with~$\psi(Q')\preccurlyeq a$, it is possible to choose~$Q'$ to
contain a place~$v$ with~$\ch(\mathfrak{K}_v)\not\in S$.
For~$r\geqslant 0$, set
\[
n(r)=\morder_v q^r\prod_{p\in S}p^E,
\]
where~$q=\ch(\mathfrak{K}_v)$. Since~$j(n(r))$ is
bounded,~\eqref{finite_rank_equivalence_zero_char_bounding_equation}
shows that~$a_{n(r)}$ is also bounded. However, again applying
Lemma~\ref{fundamental_evaluation_lemma}, for a sufficiently large
choice of~$r$,
\[
|\theta_{n(r)}|^{-1}_{Q'}\geqslant|\theta_{n(r)}|_v^{-1}>a_{n(r)},
\]
contradicting~$\psi(Q')\preccurlyeq a$.

Now assume that~$\ch(\mathbb{K})=p>0$.
Then~$R/\mathfrak{p}=\mathbb{F}_p[\overline{t}]$ and~$\overline{t}$
is transcendental over~$\mathbb{F}_p$. Since~$Q$ is finite,
Lemma~\ref{fundamental_evaluation_lemma} shows that the
set~$\{a_n\mid\ord_p(n)=0,n\geqslant 1\}$ is bounded. There are, by
assumption, infinitely many distinct~$Q'\subset \widetilde{P}$
with~$\psi(Q')\preccurlyeq a$ so it is possible to choose such
a~$Q'$ to contain a place~$v$ for which~$|\mathfrak{K}_v|$ is larger
than this bound. However,~$|\theta_{\morder_v}|_v \leqslant
|\mathfrak{K}_v|^{-1}$ and since~$\ord_p(\morder_v)=0$,
\[
|\theta_{\morder_v}|^{-1}_{Q'} \geqslant |\theta_{\morder_v}|_v^{-1}\geqslant
|\mathfrak{K}_v|>a_{\morder_v},
\]
contradicting~$\psi(Q')\preccurlyeq a$ again.
\end{proof}

\begin{figure}[!h]
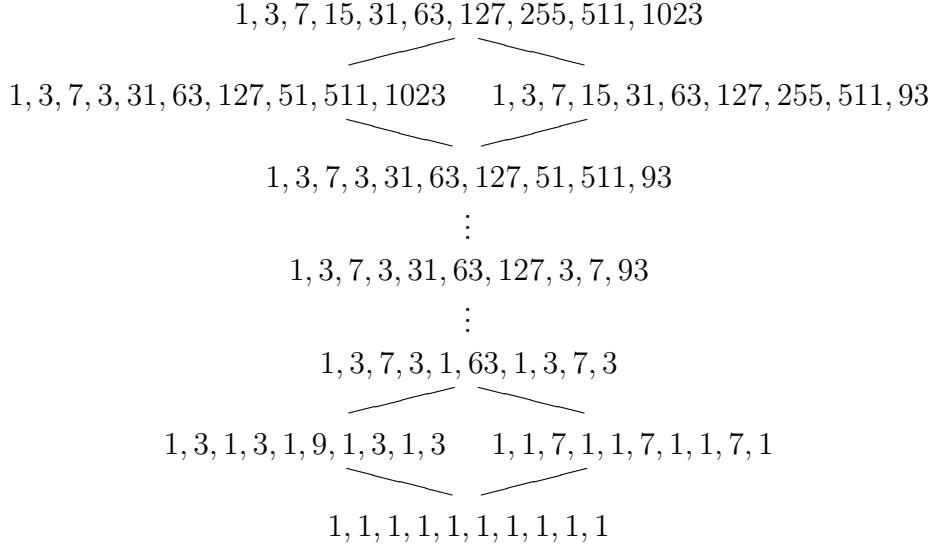

\centering
\renewcommand{\arraystretch}{1.1}
\begin{tabular}{rl}
\multicolumn{2}{c}{$1,3,7,15,31,63,127,255,511,1023$}\\
\line(4,1){40} & \hspace{39pt}\line(-4,1){40}\\
$1,3,7,3,31,63,127,51,511,1023~$ & $~1,3,7,15,31,63,127,255,511,93$\\
\line(-4,1){40} \hspace{27pt} & \line(4,1){40}\\
\multicolumn{2}{c}{$1,3,7,3,31,63,127,51,511,93$}\\
\multicolumn{2}{c}{$\vdots$}\\
\multicolumn{2}{c}{$1,3,7,3,31,63,127,3,7,93$}\\
\multicolumn{2}{c}{$\vdots$}\\
\multicolumn{2}{c}{$1,3,7,3,1,63,1,3,7,3$}\\
\line(4,1){40} & \hspace{39pt}\line(-4,1){40}\\
$1,3,1,3,1,9,1,3,1,3~$ & $~1,1,7,1,1,7,1,1,7,1$\\
\line(-4,1){40} \hspace{27pt} & \line(4,1){40}\\
\multicolumn{2}{c}{$1,1,1,1,1,1,1,1,1,1$}\\
\end{tabular}
\caption{\label{hassediagramfordoublingmap}Part of the Hasse diagram
for~$[\mathbb{K}(t-2)]$.}
\end{figure}

\begin{example}
The poset~$[\mathbb{K}(t-2)]$, which contains the sequences of
periodic point counts arising from algebraic factors of the map~$x\mapsto
2x$ on the solenoid~$\widehat{\mathbb Q}$, is illustrated in
Figure~\ref{hassediagramfordoublingmap} (the figure shows a small
part of the full poset: the first level from the bottom
has infinitely many sequences, parameterised by the
rational primes). The greatest element
corresponds to the circle doubling map~$x\mapsto 2x$ on~$\mathbb
T=\mathbb R/\mathbb Z$, and the central sequence
represents~$\psi(Q)$, where~$Q$ is the set of Mersenne primes.
According to
Theorem~\ref{finite_rank_to_finite_places_theorem},~$\psi(Q)$
has finite rank if and only if there are finitely many Mersenne
primes. The least element corresponds to the map~$x\mapsto 2x$
on~$\widehat{\mathbb Q}$, corresponding to~$\psi(Q)$ where~$Q$ is
the set of all rational primes.
\end{example}

\begin{remark}
The~$S$-integer systems studied by Chothi, Everest and
Ward~\cite{MR1461206} give an alternative way to describe the
periodic point sequences we study. In the connected case, they may
be described as follows. Fix an algebraic number field~$\mathbb K$
with set of places~$\places(\mathbb K)$ and set of infinite
places~$\places_{\infty}(\mathbb K)$, an element of infinite
multiplicative order~$\xi\in\mathbb K^*$, and a set~$S\subset
\places(\mathbb K)\setminus\places_{\infty}(\mathbb K)$ with the
property that~$\vert\xi\vert_w\le1$ for all~$w\notin S\cup
\places_{\infty}(\mathbb K)$. The associated ring of~$S$-integers is
\[
R_S=\{x\in\mathbb K\mid\vert x\vert_w\le1\mbox{ for all } w\notin
S\cup\places_{\infty}(\mathbb K)\}.
\]
Let~$X$ be the character group of~$R_S$, and define an
endomorphism~$\alpha$ to be the dual of the map~$x\mapsto\xi x$ on~$R_S$.
By~\cite[Lemma~5.2]{MR1461206} the number of points in~$X$ fixed
by~$\alpha^n$ is
\begin{equation*}\label{equation:periodicpointformulaingeneral}
\fix_{\alpha}(n)=\prod_{w\in S\cup\places_{\infty}(\mathbb K)}\vert
\xi^n-1\vert_w.
\end{equation*}
In the language of~\cite{MR1461206}, the systems we are interested
in are the co-finite ones (those for which~$\places(\mathbb
K)\setminus S$ is a finite set) and finite products of them. Using
the product formula for global fields, the periodic point formula
when~$S$ is co-finite reduces to one of the factors
in~\eqref{periodic_point_formula}.
\end{remark}

In view of the canonical inclusion
\eqref{canonical_inclusion_of_ordered_sequences} and the results
of this section, we make the following definition.

\begin{definition}\label{definefcr}
Let~$M\in\widehat{\mathcal{D}}$.
Then~$\alpha_M$ has \emph{finite combinatorial rank}
if~$\fix_{\alpha_M}$ has finite rank in
\[
\prod_{\mathfrak{p}\in\ass(M)}[\mathbb{K}(\mathfrak{p})]^{m(\mathfrak{p})}.
\]
\end{definition}
For the rest of the paper, we assume
that~$(X,\alpha)\in\mathcal{D}$.

\section{Dynamical Dirichlet series}

Writing~$\orbit_{\alpha}(n)$ for the number of orbits of length~$n$
under~${\alpha}$, we have the following arithmetic relation between periodic points and orbits,
\[
\fix_{\alpha}(n)=\sum_{d\divides n}d\orbit_{\alpha}(d).
\]
By M\"obius inversion,
\begin{equation}\label{equation:moebius_inversion_formula}
\orbit_{\alpha}(n)=\frac{1}{n}\sum_{d\divides
n}\mu(n/d)\fix_{\alpha}(d).
\end{equation}

\begin{definition}
The \emph{dynamical Dirichlet series} associated to the
map~${\alpha}$ is the formal series
\[
\dirichlet_{\alpha}(z)=\sum_{n=1}^{\infty}\frac{\orbit_{\alpha}(n)}{n^z}.
\]
\end{definition}

\begin{example}
The examples we wish to study are
group epimorphisms of finite combinatorial rank,
but the quadratic map~$\alpha:x\mapsto1-cx^2$ on the
interval~$[-1,1]$ at the Feigenbaum
value~$c=1.401155\cdots$ (see
Feigenbaum's lecture
notes~\cite{MR1209847}) gives a particularly
simple example of a dynamical
Dirichlet series. This map has
\[
\orbit_{\alpha}(n)=\begin{cases}1&\mbox{if }n=2^k
\mbox{ for some }k\ge0;\\
0&\mbox{if not},
\end{cases}
\]
so~$\dirichlet(z) =\frac{1}{1-2^{-z}}$. In this
example it is clear that~$\pi_{\alpha}(N)=\frac{\log N}{\log 2}+\bigo(1)$
in accordance with Theorem~\ref{wheniwakeupearlyinthemorning},
but it is important to emphasise that
in general the asymptotic growth statements
cannot be deduced from
the
analytic behaviour of the
Dirichlet series alone.
\end{example}

Using convolution of Dirichlet series
(see~\cite[Sec.~3.7]{MR1442260}), an alternative way of expressing
the relation~\eqref{equation:moebius_inversion_formula} is
\begin{equation}\label{equation:zetatrickone}
\dirichlet_\alpha(z)=\frac{1}{\zeta(z+1)}\sum_{n=1}^{\infty}\frac{\fix_\alpha(n)/n}{n^z},
\end{equation}
where as usual
\[
\zeta(z+1)=\sum_{n=1}^{\infty}\frac{1/n}{n^z}.
\]
For systems of finite combinatorial rank, this observation is
extremely useful: it is possible to extract~$\zeta(z+1)$ as a factor
of the series on the right-hand side of
\eqref{equation:zetatrickone}, resulting in an exact expression for
orbit counting. Note that only combinatorial properties of the zeta
function are of importance here.

Recall that the set of rational primes $p$ with
\begin{equation}\label{primes_in_the_periodic_point_sequence}
p\,\divides\fix_\alpha(n)\mbox{ for some }n\in\mathbb{N}
\end{equation}
is finite when $\fix_\alpha$ has finite rank.

\begin{theorem}\label{theorem:exactorbitcountingformula}
Let $(X,\alpha)$ be a system of finite combinatorial rank
and let $Q$ be the set of rational primes given
by~\eqref{primes_in_the_periodic_point_sequence}.
Then $\dirichlet_\alpha(z)$ is a linear combination
of Dirichlet series of the form
\[
\sum_{\mathbf{e}\in \mathbb{N}_0^P}
\frac{\varphi_P(\Lambda(\mathbf{e}))}{(b\varphi_P(\mathbf{e}))^z},
\]
where $b\in\mathbb{N}$, $P\subset Q$,
$\Lambda:\mathbb{N}_0^P\rightarrow \mathbb{N}_0^P$
and $\varphi_P(\mathbf{e})=\prod_{p\in P}p^{e_p}$.

Furthermore, all these quantities can be
determined explicitly.
In particular, denoting the local field
corresponding
to a place~$v$ by~$\mathbb{K}_v$ and the residue
degree of~$v$ by~$r_v$,
\[
\Lambda(\mathbf{e})_p=-e_p+\sum_{v\in S(p)}r_v\lambda_v(e_p),
\]
where~$S(p)$ is a set of places with
residue class fields
of characteristic~$p$, and
\[
\lambda_v(e_p)=
\left\{
\begin{array}{ll}
e_p & \mbox{ if } \ch(\mathbb{K}_v) = 0, \\
\eta_v p^{e_p} & \mbox{ if } \ch(\mathbb{K}_v) > 0,
\end{array}
\right.
\]
for some positive integer constant $\eta_v$.
\end{theorem}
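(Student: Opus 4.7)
The plan is to start from \eqref{equation:zetatrickone}, which reduces the analysis of $\dirichlet_\alpha(z)$ to computing $\sum_n \fix_\alpha(n)/n^{z+1}$ and then dividing by $\zeta(z+1)$. By Lemma~\ref{counting_formula_lemma}, $\fix_\alpha(n) = \prod_i |\theta(\mathfrak{p}_i)_n|_{P_i}^{-1}$, and the finite combinatorial rank hypothesis together with Theorem~\ref{finite_rank_to_finite_places_theorem} forces each $P_i$ to be finite. Hence $V = \bigcup_i P_i$ is a finite set of places, and the associated finite prime set $Q = \{\ch(\mathfrak{K}_v)\mid v \in V\}$ is exactly the $Q$ in the theorem statement.

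Since by Lemma~\ref{fundamental_evaluation_lemma} each $|\theta(\mathfrak{p})_n|_v^{-1}$ is $1$ unless $\morder_v \mid n$, I would set $c_v(n) = |\theta_n|_v^{-1} - 1$ (supported on $\morder_v\mid n$) and expand
\[
\fix_\alpha(n) = \prod_{v \in V}\bigl(1 + c_v(n)\bigr) = \sum_{T \subseteq V}\prod_{v \in T}c_v(n).
\]
For each $T$ set $L_T = \lcm_{v \in T}\morder_v$; the $T$-summand is supported on $L_T\mid n$, and substituting $n = L_T k$ reduces its contribution to $L_T^{-(z+1)}\sum_{k \ge 1}h_T(k)/k^{z+1}$. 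By Lemma~\ref{fundamental_evaluation_lemma}, $h_T(k)$ depends on $k$ only through the valuations $(\ord_p(k))_{p \in P_T}$ for a finite subset $P_T \subseteq Q$.

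Writing $k = m\,\varphi_{P_T}(\mathbf{e})$ with $\gcd(m,\prod_{p \in P_T}p)=1$ and $\mathbf{e} \in \mathbb{N}_0^{P_T}$ factors the sum as
\[
\sum_k \frac{h_T(k)}{k^{z+1}} = \Bigl(\sum_{\gcd(m,\prod_{p \in P_T}p)=1}\frac{1}{m^{z+1}}\Bigr)\sum_{\mathbf{e}}\frac{H_T(\mathbf{e})}{\varphi_{P_T}(\mathbf{e})^{z+1}},
\]
and the first factor equals $\zeta(z+1)\prod_{p \in P_T}(1-p^{-(z+1)})$. The prefactor $\zeta(z+1)^{-1}$ from \eqref{equation:zetatrickone} cancels the zeta, leaving $\prod_{p\in P_T}(1-p^{-(z+1)})$, which expands as a finite alternating sum $\sum_{S \subseteq P_T}(-1)^{|S|}n_S^{-(z+1)}$ with $n_S = \prod_{p \in S}p$. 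Setting $b = L_T n_S$ and rewriting $\varphi_{P_T}(\mathbf{e})^{-(z+1)} = \varphi_{P_T}(\mathbf{e})^{-1}\varphi_{P_T}(\mathbf{e})^{-z}$ absorbs the $-e_p$ term of $\Lambda(\mathbf{e})_p$, while $H_T(\mathbf{e})$ is read directly from Lemma~\ref{fundamental_evaluation_lemma}: the characteristic-zero factor $p^{r_v\ord_{p_v}(k)}$ yields $\lambda_v(e)=e$, and the positive-characteristic factor $|\theta_{\morder_v}|_v^{-|k|_p^{-1}} = |\mathfrak{K}_v|^{\eta_v p^{\ord_p(k)}}$ yields $\lambda_v(e)=\eta_v p^e$, giving the claimed $\Lambda(\mathbf{e})_p$.

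The principal obstacle is the positive-characteristic contribution, where $H_T(\mathbf{e})$ grows doubly exponentially like $p^{c\,p^{e_p}}$ so the local sum at $p$ is not a geometric series; the relevant equality must be interpreted at the level of formal Dirichlet series, which is what the theorem asserts. A secondary bookkeeping issue is the ``$\ord_p(n)<E$'' exception in Lemma~\ref{fundamental_evaluation_lemma}: it produces only finitely many extra terms, which can be incorporated into the linear combination by adjusting $b$ and the coefficient on each term.
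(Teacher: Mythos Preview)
Your approach is correct and reaches the same conclusion, but it organises the decomposition differently from the paper. The paper partitions $\mathbb{N}$ into the sets
\[
N_S=\{n:\morder_S\mid n,\ \morder_v\nmid n\text{ for }v\in T\setminus S\},
\]
so that on each piece $f_T=f_S$ exactly; it then handles the exclusion conditions $\morder_v\nmid n$ by a further inclusion--exclusion (producing the polynomial $g$ of Lemma~\ref{orshouldisay}), and deals with the threshold $E$ in Lemma~\ref{fundamental_evaluation_lemma} by a second partition of the exponent space into the boxes $E_P(q)$. Your additive expansion $\prod_v(1+c_v)=\sum_T\prod_{v\in T}c_v$ is the M\"obius-dual of this partition: it bypasses the exclusion conditions entirely, which is cleaner, at the cost of working with $c_v=|\theta|_v^{-1}-1$ rather than with $|\theta|_v^{-1}$ itself.

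That last point is the one place where your write-up is a little too quick. The function $H_T(\mathbf e)=\prod_{v\in T}c_v(L_T\varphi_{P_T}(\mathbf e))$ is \emph{not} a single monomial $\varphi_P(\Lambda(\mathbf e))$: each factor carries a ``$-1$'', so you must expand the product once more over subsets $T'\subseteq T$ before each summand takes the form asserted in the theorem. This is harmless for Theorem~\ref{theorem:exactorbitcountingformula} itself (it just means more terms in the linear combination), but it is worth noting that the paper's route, with its explicit polynomial $g(c^{-z},c\in\mathcal C)$, is engineered to feed directly into Lemma~\ref{orshouldisay}; that lemma records the sign pattern of the coefficients and is what drives the positivity arguments in Theorem~\ref{wheniwakeupearlyinthemorning}. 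Your expansion would require a separate check to recover that structure.
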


Theorem~\ref{theorem:exactorbitcountingformula} and its proof allow us
to deduce the following result for connected systems.

\begin{theorem}\label{theorem:dirichletseriesarerational}
Let~$(X,\alpha)$ be a connected system of finite combinatorial rank.
Then there is a finite set~$\mathcal{C}\subset\mathbb{N}$
with the property that~$\dirichlet_{\alpha}(z)$ is a rational
function of the variables~$\{c^{-z}\mid c\in\mathcal{C}\}$.
\end{theorem}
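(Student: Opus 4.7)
The plan is to invoke Theorem~\ref{theorem:exactorbitcountingformula} and use the connectedness hypothesis to collapse each multidimensional Dirichlet series appearing there into a product of geometric series, which is then manifestly rational in finitely many variables of the form $c^{-z}$.

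The first step is to extract the consequence of connectedness. Since $X$ is connected, its Pontryagin dual $M=\widehat{X}$ is torsion-free as an abelian group. If some $\mathfrak{p}\in\ass(M)$ met $\mathbb{Z}$ nontrivially, then a rational prime would annihilate a nonzero element of $M$, contradicting torsion-freeness. Hence $\mathfrak{p}\cap\mathbb{Z}=\{0\}$ for every associated prime, so each $\mathbb{K}(\mathfrak{p})$ is a number field, and every completion $\mathbb{K}_v$ at a finite place has characteristic zero. The explicit description in Theorem~\ref{theorem:exactorbitcountingformula} then gives $\lambda_v(e_p)=e_p$ for every place $v$ appearing, so
\[
\Lambda(\mathbf{e})_p=e_p\Bigl(\sum_{v\in S(p)}r_v-1\Bigr)=:A_p\,e_p,
\]
which is linear and separated in the coordinates $e_p$.

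Using this separation, each summand produced by Theorem~\ref{theorem:exactorbitcountingformula} factors as a finite product of geometric series:
\[
\sum_{\mathbf{e}\in\mathbb{N}_0^P}\frac{\prod_{p\in P}p^{A_p e_p}}{\bigl(b\,\varphi_P(\mathbf{e})\bigr)^z}
=b^{-z}\prod_{p\in P}\sum_{e=0}^{\infty}\bigl(p^{A_p-z}\bigr)^{e}
=b^{-z}\prod_{p\in P}\frac{1}{1-p^{A_p}\,p^{-z}},
\]
which is a rational function of $b^{-z}$ and $\{p^{-z}\mid p\in P\}$. Since Theorem~\ref{theorem:exactorbitcountingformula} expresses $\dirichlet_\alpha(z)$ as a \emph{finite} linear combination of such summands, only finitely many values of $b$ and finitely many primes $p\in Q$ occur.

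Finally, take $\mathcal{C}\subset\mathbb{N}$ to be the union of the finite set $Q$ with the finite collection of natural numbers $b$ appearing. Then $\dirichlet_\alpha(z)$ is a rational function of $\{c^{-z}\mid c\in\mathcal{C}\}$, as required. The only nontrivial step in this plan is the characteristic-zero reduction coming from connectedness; the decoupling into geometric series and the subsequent collection of variables are automatic once this reduction is in hand.
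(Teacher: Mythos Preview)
Your proof is correct and follows essentially the same route as the paper: both use connectedness to force every local field $\mathbb{K}_v$ to have characteristic zero, so that $\Lambda(\mathbf{e})_p=\bigl(\sum_{v\in S(p)}r_v-1\bigr)e_p$ is linear in each coordinate, after which the multidimensional sum factors as a product of geometric series $\prod_{p\in P}(1-p^{A_p-z})^{-1}$. Your extra sentence explaining why connectedness forces $\mathfrak p\cap\mathbb Z=\{0\}$ is a nice touch---the paper simply asserts that ``all places arise from number fields''---but otherwise the arguments coincide.
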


In principle, Theorem~\ref{theorem:exactorbitcountingformula} provides
an exact formula
for orbit counting, since it implies that~$\pi_\alpha(N)$
is a linear combination of expressions of the form
\begin{equation*}
\mathop{\sum_{\mathbf{e}\in \mathbb{N}_0^P,}}_{\varphi_P(\mathbf{e})\leqslant N/b}
\varphi_P(\Lambda(\mathbf{e})).
\end{equation*}
However, a closed asymptotic expression involving elementary functions
is often more desirable, the prime number theorem being a case in point.
Our focus is on connected systems,
although Theorem~\ref{theorem:exactorbitcountingformula}
may also be applied in the disconnected case
(\emph{cf}.~Example~\ref{zero_dimensional_example}).

\begin{theorem}\label{rank_one_theorem}
Let~$(X,\alpha)$ be a connected system of combinatorial rank one and
let $r_v$ be the residue degree of the associated place. Then
\[
\pi_{\alpha}(N)=
\left\{
\begin{array}{ll}
C\log N + \bigo(1) & \mbox{ if } r_v=1,\\
\delta(N)N^{r_v-1}+\bigo(1) & \mbox{otherwise},
\end{array}
\right.
\]
where~$\delta(N)$ is an
explicit
oscillatory function bounded away from zero and infinity.
\end{theorem}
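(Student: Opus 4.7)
The plan is to combine the exact form of $\dirichlet_\alpha(z)$ furnished by Theorem~\ref{theorem:exactorbitcountingformula} with a direct geometric-sum evaluation of the resulting partial sum for $\pi_\alpha(N)$. Working directly with the series, rather than passing through a Tauberian theorem, lets the oscillatory function $\delta(N)$ emerge naturally from the discrepancy between the continuous parameter $N$ and the discrete scale $p^e$ at which orbits sit.

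First, I specialise Theorem~\ref{theorem:exactorbitcountingformula}. Combinatorial rank one, read through Theorem~\ref{finite_rank_to_finite_places_theorem}, produces a single contributing place $v$ lying over a single rational prime $p=\ch(\mathfrak{K}_v)$; connectedness of $X$ forces $\ch(\mathbb{K}(\mathfrak{p}))=0$, so in the notation of Theorem~\ref{theorem:exactorbitcountingformula} we have $\lambda_v(e_p)=e_p$ and hence
\[
\Lambda(\mathbf{e})_p=(r_v-1)e_p,\qquad P=\{p\}.
\]
Thus $\dirichlet_\alpha(z)$ is a finite linear combination, with coefficients $c_b$, of series
\[
\sum_{e\geq 0}\frac{p^{(r_v-1)e}}{(bp^e)^z}=\frac{b^{-z}}{1-p^{r_v-1-z}},
\]
and correspondingly $\pi_\alpha(N)$ equals the same linear combination of partial sums
\[
\Sigma_b(N)=\sum_{e\,:\,bp^e\leq N}p^{(r_v-1)e},
\]
up to a $\bigo(1)$ error coming from the finitely many small $n$ for which Lemma~\ref{fundamental_evaluation_lemma} has not yet stabilised.

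Next I evaluate $\Sigma_b(N)$ explicitly. If $r_v=1$ the summand is $1$, so $\Sigma_b(N)=\lfloor\log_p(N/b)\rfloor+1=(\log N)/\log p+\bigo(1)$, which yields the claimed $\pi_\alpha(N)=C\log N+\bigo(1)$. If $r_v>1$, the geometric sum evaluates to
\[
\Sigma_b(N)=\frac{p^{(r_v-1)(E_b(N)+1)}-1}{p^{r_v-1}-1},
\]
where $E_b(N)=\lfloor\log_p(N/b)\rfloor$. Substituting $p^{E_b(N)}=(N/b)\,p^{-\{\log_p(N/b)\}}$ factors out $N^{r_v-1}$ and gives $\pi_\alpha(N)=\delta(N)\,N^{r_v-1}+\bigo(1)$, with
\[
\delta(N)=\frac{p^{r_v-1}}{p^{r_v-1}-1}\sum_b c_b\,b^{-(r_v-1)}\,p^{-(r_v-1)\{\log_p(N/b)\}}.
\]
Since $\delta$ depends on $N$ only via the fractional parts $\{\log_p(N/b)\}$, it extends to a continuous function on a finite-dimensional torus and is in particular bounded above.

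The main obstacle is to show that $\delta(N)$ is bounded \emph{away from zero}, i.e.\ strictly positive on that torus. I expect this to follow from a direct inspection of the coefficients $c_b$ produced in the proof of Theorem~\ref{theorem:exactorbitcountingformula}: those coefficients arise from a M\"obius-like identity with an extracted $\zeta(z+1)$-factor, and a careful accounting shows that the contributions combine to a single positive oscillatory term. Once positivity at a single point of the torus is established, compactness delivers the uniform lower bound. The remaining steps---absorbing the finitely many small-$n$ discrepancies into $\bigo(1)$, and confirming that rank one really does collapse the sum in Theorem~\ref{theorem:exactorbitcountingformula} to the single index $p$---are routine.
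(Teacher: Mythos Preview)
Your approach mirrors the paper's: derive the rank-one Dirichlet series, isolate the dominant term with the pole at $z=r_v-1$, and read off $\pi_\alpha(N)$ from the resulting geometric partial sums. The difference is that the paper bypasses Theorem~\ref{theorem:exactorbitcountingformula} and computes $\dirichlet_\alpha(z)$ directly from Lemma~\ref{fundamental_evaluation_lemma}, obtaining a Dirichlet polynomial plus a single dominant piece
\[
\frac{cb^{-z}}{1-p^{m-z}}\Bigl(1-\tfrac{1}{p^{z+1}}\Bigr),\qquad m=r_v-1.
\]
Expanding produces only the two constants $b$ and $bp$; since these differ by a power of $p$ their fractional parts $\{\log_p(N/\cdot)\}$ coincide, and the two contributions combine into $(1-p^{-m-1})\sum_{e=0}^{\lfloor\log_p(N/b)\rfloor}p^{me}+\bigo(1)$ with a manifestly positive leading coefficient. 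Positivity of $\delta(N)$, and of $C$ when $r_v=1$, is then automatic.

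Your route through the general theorem leaves you with a signed combination $\sum_b c_b\,\Sigma_b(N)$, and your treatment of the lower bound on $\delta$ has a gap: continuity on a compact torus together with positivity at a \emph{single} point does not yield a positive infimum --- you need positivity everywhere. The repair is precisely the structural fact that the paper's explicit computation exposes: in rank one the only divisibility constraint in the inclusion--exclusion is $p\nmid k$ (because $T\setminus S=\varnothing$ when $S=T$), so every $b$ occurring in the dominant term has the form $Bp^k$ for a fixed $B$, all the fractional parts collapse to a single one, and $\delta(N)$ reduces to a positive constant times $p^{-(r_v-1)\{\log_p(N/B)\}}$. With that reduction made explicit your argument and the paper's coincide; note also that the same reduction is what supplies $C>0$ in the $r_v=1$ case, which your proposal leaves unaddressed.
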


Theorem~\ref{rank_one_theorem} shows that an elementary asymptotic
formula for $\pi_\alpha$ cannot be expected in general. Typically, an oscillatory function
similar to that in Theorem~\ref{rank_one_theorem} appears when the abscissa
of convergence of~$\dirichlet_\alpha$ is greater than zero and is a simple pole; we
provide a Chebychev result in this case. In all other cases,
an elementary asymptotic formula is found.

\begin{theorem}\label{wheniwakeupearlyinthemorning}
Let~$(X,\alpha)$ be a connected system of finite
combinatorial rank whose Dirichlet series~$\dirichlet_\alpha$
has abscissa of convergence~$\sigma\ge 0$. Let~$K$ denote the order of this pole.
\begin{enumerate}
\item If~$\sigma=0$ then there is a constant~$C>0$ with
\[
\pi_{\alpha}(N)
=
C\left(\log N\right)^K+\bigo\left((\log N)^{K-1}\right).
\]
\item If~$\sigma>0$ and~$K=1$ then
there are constants~$A,B,N_0>0$ such that for all $N>N_0$,
\[
A N^{\sigma}\leqslant\pi_\alpha(N)\leqslant B N^{\sigma}.
\]
\item If~$\sigma>0$ and~$K\ge 2$ then
there is a constant~$C>0$ with
\[
\pi_{\alpha}(N)
\sim
C N^{\sigma}\left(\log N\right)^{K-1}.
\]
\end{enumerate}
\end{theorem}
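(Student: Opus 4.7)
My plan is to apply Theorem~\ref{theorem:exactorbitcountingformula} directly and then analyze the resulting explicit partial sums, bypassing Tauberian methods (which, as the introduction notes, are obstructed by the clustering of singularities on $\Re(z)=\sigma$). Connectedness of~$X$ forces each $\mathfrak p\in\ass(M)$ to satisfy $\ch(\mathbb K(\mathfrak p))=0$, so the formula of Theorem~\ref{theorem:exactorbitcountingformula} simplifies via $\lambda_v(e_p)=e_p$ to
\[
\Lambda(\mathbf e)_p = e_p k_p,\qquad k_p = -1+\sum_{v\in S(p)}r_v\in\mathbb Z,
\]
so that each constituent Dirichlet series equals the rational function $b^{-z}\prod_{p\in P}(1-p^{k_p-z})^{-1}$. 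Hence $\pi_\alpha(N)$ is a finite linear combination of partial sums
\[
S(N)=\sum_{\mathbf e\in\mathbb N_0^P:\;\varphi_P(\mathbf e)\le N/b}\prod_{p\in P}p^{e_p k_p},
\]
and the task reduces to the asymptotic analysis of each such $S(N)$ followed by reassembly.

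Fix a single term, let $\sigma_P=\max_{p\in P}k_p$, and split $P=P^+\sqcup P^-$ according to whether $k_p=\sigma_P$ or $k_p<\sigma_P$. With $\mathbf e=(\mathbf e^+,\mathbf e^-)$ and $m_\pm=\varphi_{P^\pm}(\mathbf e^\pm)$, Fubini gives
\[
S(N)=\sum_{\mathbf e^-:\;bm_-\le N}\prod_{p\in P^-} p^{e_p k_p}\cdot T_{P^+}(N/(bm_-)),
\]
where $T_{P^+}(M)=\sum_{\varphi_{P^+}(\mathbf e^+)\le M}\prod_{p\in P^+}p^{e_p\sigma_P}$. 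The core analytic step is to evaluate $T_{P^+}(M)$ by lattice-point counting in the simplex $\{\mathbf y\in\mathbb R_{\ge 0}^{P^+}\mid \sum y_p\le\log M\}$ combined with Abel summation against the weight $e^{\sigma_P y}$. Using the count $N(Y)=Y^{|P^+|}/(|P^+|!\prod_{p\in P^+}\log p)+O(Y^{|P^+|-1})$ this yields
\[
T_{P^+}(M)=\begin{cases}
\dfrac{(\log M)^{|P^+|}}{|P^+|!\prod \log p}+O\bigl((\log M)^{|P^+|-1}\bigr) & \sigma_P=0,\\[6pt]
c\,M^{\sigma_P}\delta(M)+O(1) & \sigma_P>0,\ |P^+|=1,\\[6pt]
\dfrac{M^{\sigma_P}(\log M)^{|P^+|-1}}{\sigma_P(|P^+|-1)!\prod \log p}+O\bigl(M^{\sigma_P}(\log M)^{|P^+|-2}\bigr) & \sigma_P>0,\ |P^+|\ge 2,
\end{cases}
\]
where $\delta(M)=(p^*)^{-\sigma_P\{\log_{p^*}M\}}$ is the explicit sawtooth oscillation bounded in $[(p^*)^{-\sigma_P},1]$.

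To reassemble, substitute back and use that $\sum_{\mathbf e^-}\prod_{p\in P^-}p^{e_p(k_p-\sigma_P)}$ converges absolutely (each factor has $k_p-\sigma_P<0$), so truncation at $bm_-\le N$ costs only geometrically small tails, and $\log(N/bm_-)=\log N+O(\log m_-)$ may be expanded with higher-order terms absorbed by the convergence. Each individual $S(N)$ therefore contributes $C(\log N)^{|P^+|}+O((\log N)^{|P^+|-1})$ when $\sigma_P=0$; an oscillatory term $C N^{\sigma_P}\widetilde\delta(N)+O(1)$ when $\sigma_P>0$ and $|P^+|=1$; and $C N^{\sigma_P}(\log N)^{|P^+|-1}(1+o(1))$ when $\sigma_P>0$ and $|P^+|\ge 2$. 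Summing over the (finitely many) constituent terms and invoking the hypothesis that the pole of $\dirichlet_\alpha$ at $\sigma$ has order exactly $K$ forces the overall leading coefficient to be nonzero (any vanishing would lower the pole order), so (1) and (3) follow with $C>0$ by positivity of $\pi_\alpha$. For (2) the combined oscillation $\sum_T C_T\widetilde\delta_T(N)$ need not simplify to a single asymptotic, but each $\widetilde\delta_T$ is bounded away from zero and infinity, giving the Chebychev bounds $A N^\sigma\le \pi_\alpha(N)\le B N^\sigma$.

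The principal obstacle is the refined error estimate for $T_{P^+}(M)$ when $|P^+|\ge 2$: improving the naive $O(M^{\sigma_P}(\log M)^{|P^+|-1})$ down to $O(M^{\sigma_P}(\log M)^{|P^+|-2})$ is what drives case~(3), and I would obtain it by iterating Abel summation coordinate by coordinate in $P^+$, exploiting that the weight $e^{\sigma_P y}$ concentrates the sum at the outer boundary of the simplex so each iteration drops one power of $\log M$. A secondary subtlety is ruling out cancellation among the linear-combination terms when establishing $C\ne 0$ in (1) and (3), and the lower bound $AN^\sigma$ in (2); this is where the exact pole-order hypothesis on $\dirichlet_\alpha$ at $\sigma$ is used decisively, together with the nonnegativity and monotonicity of $\pi_\alpha$.
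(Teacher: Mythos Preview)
Your approach to part~(1) is essentially the paper's: when $\sigma=0$ the problem is a pure lattice-point count in a simplex, and your formula for $T_{P^+}(M)$ is correct with the stated error.

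However, there is a genuine gap in part~(3). Abel summation against the weight $e^{\sigma_P y}$ using only $N(Y)=cY^k+O(Y^{k-1})$ does \emph{not} give the error $O(M^{\sigma_P}(\log M)^{|P^+|-2})$ you claim. Writing $N(Y)=cY^k+R(Y)$ with $R(Y)=O(Y^{k-1})$, the remainder
\[
e^{\sigma_P Y}R(Y)-\sigma_P\int_0^Y e^{\sigma_P y}R(y)\,dy
\]
is $O(e^{\sigma_P Y}Y^{k-1})$, the \emph{same} order as the main term, because $R$ is a step function with no further smoothness to exploit by integration by parts. Concretely, for $|P^+|=2$ with primes $p,q$, summing coordinate-by-coordinate gives
\[
T_{P^+}(M)=\frac{q^{\sigma}M^{\sigma}}{q^{\sigma}-1}
\sum_{a=0}^{\lfloor\log_p M\rfloor}
q^{-\sigma\{\log_q M-a\log_q p\}}+O(M^{\sigma}),
\]
and the remaining sum has $\Theta(\log M)$ terms each in $[q^{-\sigma},1]$. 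To extract the leading constant one must show that the average of $q^{-\sigma\{\cdot\}}$ along the orbit of the rotation by $\log_q p$ converges to $\int_0^1 q^{-\sigma y}\,dy$, which is exactly Weyl's equidistribution theorem (equivalently, unique ergodicity of the irrational rotation). This is the mechanism the paper uses, and it is essential: the paper's own remark after the theorem notes that obtaining an actual error term of the strength you assert would require Baker's theorem on linear forms in logarithms together with discrepancy bounds. Your ``iterated Abel summation'' does not circumvent this, because each iteration produces a fractional-part oscillation that cannot be controlled without equidistribution.

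There is a second gap in the lower bound of part~(2). The coefficients $C_T$ in the linear combination carry both signs (they arise from inclusion--exclusion, cf.\ Lemma~\ref{orshouldisay}), so boundedness of each $\widetilde\delta_T$ away from zero does not prevent $\sum_T C_T\widetilde\delta_T(N)$ from approaching zero along a subsequence. The pole-order hypothesis controls a single weighted average of the $C_T$ (the residue), not the pointwise behaviour of the oscillatory sum, and monotonicity of $\pi_\alpha$ alone is insufficient to upgrade this to a uniform lower bound. The paper avoids this entirely: it identifies a prime $q$ with $\sum_{v\mid q}r_v=\sigma+1$ and shows directly from the periodic-point formula that $\orbit_\alpha(\ell q^e)\geqslant Cq^{\sigma e}$ for large $e$, then sums.
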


\begin{remark}
(1) If~$(X,\alpha)$ is a connected system of finite
combinatorial rank and~$X$ has topological dimension one, the
proof of Theorem~\ref{theorem:exactorbitcountingformula} shows that
$\dirichlet_\alpha$ has abscissa of convergence $\sigma=0$. Thus, the exact
asymptotic of Theorem~\ref{wheniwakeupearlyinthemorning}(1) applies. In general, there is no
straightforward relationship between $\dirichlet_\alpha$ and the topological dimension of~$X$.

\noindent(2) Using
Hlawka's bounds~\cite{MR0174544} on equidistribution in terms of
discrepancy and Baker's theorem, the
asymptotic in Theorem~\ref{wheniwakeupearlyinthemorning}(3)
could potentially be improved to give an error term.
\end{remark}

\section{Examples}

One of the characteristic features of non-trivial~$S$-integer
dynamical systems is the extremely complex behavior of their
dynamical zeta functions
(see~\cite{emsw},~\cite{MR2180241},~\cite{MR1702897} for example),
so the rationality
of the Dirichlet series
for systems of finite combinatorial rank
is a little surprising.

\begin{example}\label{exampleone}
The basic reason for rationality of the Dirichlet series is already
visible in the simplest non-trivial example given by the
map~$\alpha$ dual to the map~$x\mapsto2x$ on~$\mathbb
Z_{(3)}=\mathbb Z[\frac{1}{p}\mid p\mbox{ a prime}\neq3]$.
We have
\begin{equation*}
\fix_{\alpha}(n)=\vert
2^n-1\vert_3^{-1}
\end{equation*}
and hence
\[
\fix_{\alpha}(n)=\begin{cases} 3^{\ord_3(n)
+1}&\mbox{if $n$ is even};\\
1&\mbox{otherwise}.
\end{cases}
\]
From~\eqref{equation:zetatrickone} we know that
\begin{equation*}
\dirichlet_{\alpha}(z)=\zeta(z+1)^{-1}\left(
\sum_{2\smallnotdivides n}\frac{\fix_{\alpha}(n)}{n^{z+1}}
+\sum_{2\divides n}\frac{\fix_{\alpha}(n)}{n^{z+1}}\right).
\end{equation*}
Treating each term separately,
\[
\sum_{2\smallnotdivides n}\frac{\fix_{\alpha}(n)}{n^{z+1}}=
\sum_{n\ge1}\frac{1}{n^{z+1}}-\sum_{n\ge1}\frac{1}{(2n)^{z+1}}
=\zeta(z+1)\left(1-\frac{1}{2^{z+1}}\right)
\]
and (on writing an even~$n$ as~$2\cdot k\cdot 3^e$ with~$3\notdivides
k$)
\begin{eqnarray*}
\sum_{2\divides n}\frac{\fix_{\alpha}(n)}{n^{z+1}}&=&
\sum_{e\ge0}\sum_{k\ge1,3\smallnotdivides k}\frac{3^{e+1}}{(2k3^e)^{z+1}}\\
&=&
\frac{3}{2^{z+1}}\sum_{e\ge0}\frac{1}{3^{ez}}\sum_{k\ge1,3\smallnotdivides k}\frac{1}{k^{z+1}}\\
&=&
\frac{3}{2^{z+1}}\cdot\frac{1}{1-3^{-z}}\cdot\zeta(z+1)\left(
1-\frac{1}{3^{z+1}}\right).
\end{eqnarray*}
Thus,
\begin{equation*}
\dirichlet_{\alpha}(z)= 1-\frac{1}{2^{z+1}}\left(
1-\frac{3}{1-3^{-z}}\left(1-\frac{1}{3^{z+1}}\right) \right)=
1+\frac{1}{2^z}\left(\frac{1}{1-3^{-z}}\right).
\end{equation*}
Note that the abscissa of convergence is $\sigma=0$ and this is a simple pole.

The orbit-growth function $\pi_\alpha$ may be obtained by extracting the coefficients from the series expression for $\dirichlet_{\alpha}$,
\[
\pi_\alpha(N)
\;=\;
1+\sum_{e\geqslant 0,\,2\cdot 3^e\leqslant N}1
\;=\;
1+\sum_{e=0}^{\lfloor\log_3(N/2)\rfloor}1\\
\;=\;
\frac{\log N}{\log 3} + \bigo(1).
\]
\end{example}

Notice that in Example~\ref{exampleone} the rational expression for~$\dirichlet_{\alpha}$ has
infinitely many singularities along the line~$\Re(z)=0$ at the
points~$2k\pi{\rm i}/\log3$ for~$k\in\mathbb Z$. A less direct method for obtaining
an asymptotic estimate for $\pi_\alpha(N)$ is provided by Agmon's Tauberian theorem~\cite{MR0054079} which applies in  situations like this, but as the
next example shows, the general case has even worse analytic
properties. It also illustrates the impact of additional places.

\begin{example}\label{exampletwo}
Let~$\alpha$ be the map dual to~$x\mapsto 2x$ on~$\mathbb{Z}_{(3)}\cap\mathbb{Z}_{(5)}$. Then
\[
\fix_{\alpha}(n)=\vert2^n-1\vert_3^{-1}\vert2^n-1\vert_5^{-1},
\]
so
\begin{equation*}
\fix_{\alpha}(n)=\begin{cases}
1&\mbox{if $n$ is odd};\\
3\cdot 3^{e_1}&\mbox{if }n=2\cdot k\cdot 3^{e_1}, 2\notdivides k,3\notdivides k;\\
15\cdot 3^{e_1}\cdot 5^{e_2}&\mbox{if }n=4\cdot k\cdot3^{e_1}\cdot 5^{e_2},3\notdivides k,
5\notdivides k.
\end{cases}
\end{equation*}
A similar calculation to that used in Example~\ref{exampleone} shows that
\[
\dirichlet_{\alpha}(z)=1-\frac{1}{2^{z+1}}+\frac{3}{2^{z+1}}
\left(
1-\frac{1}{3^{z+1}}-\frac{1}{2^{z+1}}+\frac{1}{6^{z+1}}\right)
\frac{1}{1-3^{-z}}
\]
\[+
\frac{15}{4^{z+1}}\left(
1-\frac{1}{3^{z+1}}-\frac{1}{5^{z+1}}+\frac{1}{15^{z+1}}
\right)
\frac{1}{(1-3^{-z})(1-5^{-z})}.
\]
Here, the abscissa of convergence $\sigma=0$ is a double pole. An asymptotic expression
for $\pi_\alpha$ is obtained in Section~\ref{theyvebeengoinginandoutofstyle}.
\end{example}

Notice that in Example~\ref{exampletwo} not only are there
infinitely many singularities with~$\Re(z)=0$, but there are
singularities that are arbitrarily close together.

The next example illustrates the situation when a higher topological dimension is allowed.
This may also be regarded as allowing places with a higher residue degree.

\begin{example}\label{oscillatorytermexample}
Let~$\mathbb K=\mathbb Q(\root{3}\of{5})$, with ring
of integers
\[
\mathfrak O=
\mathbb Z+\root{3}\of{5}\mathbb Z+(\root{3}\of{5})^2\mathbb Z=
\mathbb Z[\root{3}\of{5}]
\]
as~$\mathbb Z$-modules. Notice
that~$\mathfrak O\cong R/(t^3-5)$ via the map
\[
\root{3}\of{5}\mapsto \overline{t}=t+(t^3-5).
\]
There are two primes lying above the prime~$2$ of~$\mathbb Z$,
namely~$(2,1+\overline{t})$
and~$\mathfrak{m}=(2,1+\overline{t}+\overline{t}^2)$. The place $v$
corresponding to $\mathfrak{m}$ has residue degree 2,
since~$\mathfrak O/\mathfrak m\cong\mathbb F_4$.

Let $X=\widehat{\mathfrak O_{\mathfrak m}}$ and let $\alpha$ be the map dual to $x\mapsto \root{3}\of{5}\,x$.
Then $X$ has topological dimension 3 and
\[
\fix_\alpha(n)=
\vert
\overline{t}^n-1
\vert_v=
\begin{cases}
1&3\notdivides n;\\
16\cdot 2^{2\ord_2(n)}&\mbox{otherwise.}
\end{cases}
\]
Using a similar method to the previous examples, the dynamical Dirichlet series for this example is,
\[
\dirichlet_\alpha(z)=1+\frac{5}{3^z}+\frac{8}{6^z}\left(\frac{1}{1-2^{1-z}}\right).
\]
In this case, the abscissa of convergence is $\sigma=1$ and this is a simple pole.
Using a similar method as before (or by
using \eqref{equation:moebius_inversion_formula} directly), we obtain the closed formula
\[
\orbit_{\alpha}(n)=
\begin{cases}
1&n=1;\\
5&n=3;\\
4\cdot 2^e&n=3\cdot 2^e\mbox{ and }e\ge1;\\
0&\mbox{otherwise}.
\end{cases}
\]
It follows that
\begin{eqnarray}
\nonumber
\pi_\alpha(N)
& = &
6+4\sum_{e=1}^{\lfloor\log_2(N/3)\rfloor}2^e\\
\label{stayinbedimstillyawning}
& = &
\frac{8}{3}\,2^{-\{\log_2(N/3)\}}N-2,
\end{eqnarray}
for all $N\geqslant 6$, where $\{\cdot\}$ denotes the fractional part of a real number.
Notice that~\eqref{stayinbedimstillyawning} includes the oscillatory factor~$\delta(N)$
appearing in Theorem~\ref{rank_one_theorem}. In this example,
\[
\limsup_{N\rightarrow\infty}\delta(N)=\frac{8}{3}
\]
and
\[
\liminf_{N\rightarrow\infty}\delta(N)=\frac{4}{3}.
\]
\end{example}

To conclude this section, the following example shows
that the situation is quite different for disconnected systems. Here there
is no rational expression for $\dirichlet_\alpha$ although the series
organizes orbit data in a convenient way, giving an exact expression for~$\pi_\alpha$.

\begin{example}\label{zero_dimensional_example}
Let~$M$ be the discrete valuation
ring~$\mathbb{F}_3[t]_{(t-1)}$
obtained by localizing the
domain~$\mathbb{F}_3[t]$ at the prime ideal~$(t-1)$.
Since the additive group of~$M$ is
torsion,~$X=\widehat{M}$ is zero-dimensional.
Furthermore, the map~$\alpha$ dual to~$x\mapsto tx$ has
periodic points counted by a single place~$v$ of~$\mathbb{F}_3(t)$
corresponding to~$M$. That is,
\[
\fix_\alpha(n) = |t^n-1|_{(t-1)}^{-1} = 3^{|n|_3^{-1}}.
\]
Therefore,
\begin{eqnarray*}
\dirichlet_\alpha(z)
& = &
\zeta(z+1)^{-1}\sum_{n\geqslant 1}\frac{3^{|n|_3^{-1}}}{n^{z+1}}\\
& = &
\zeta(z+1)^{-1}\sum_{e\geqslant 0}\frac{3^{3^e}}{(3^e)^{z+1}}
\sum_{k\geqslant 1,3\nmid\, k}\frac{1}{k^{z+1}}\\
& = &
\left(1-\frac{1}{3^{z+1}}\right)
\sum_{e\geqslant 0}\frac{3^{3^e}/\,3^e}{(3^e)^z.}
\end{eqnarray*}
Hence, we have the following exact expression for $\pi_\alpha$,
\begin{eqnarray*}
\pi_\alpha(N)
& = &
\sum_{e\geqslant 0,\,3^e\leqslant N}\frac{3^{3^e}}{3^e} \,-\, \frac{1}{3}\sum_{e\geqslant 0,\,3^e\leqslant N/3}\frac{3^{3^e}}{3^e}\\
& = &
1+\frac{2}{3}\sum_{e\geqslant 0,\,3^e\leqslant N}\frac{3^{3^e}}{3^e}.
\end{eqnarray*}
A further calculation shows that
\[
\limsup_{N\rightarrow\infty}\frac{N\pi_\alpha(N)}{3^N}=\frac{2}{3}
\]
and
\[
 \liminf_{N\rightarrow\infty}\frac{N\pi_\alpha(N)}{3^N}=0.
\]
\end{example}

\section{Proof of Theorems~\ref{theorem:exactorbitcountingformula}
and \ref{theorem:dirichletseriesarerational}}
\label{sectionproofofrationality}

The general case requires an inclusion--exclusion
argument to deal with the congruence conditions
arising from multiple terms in~\eqref{periodic_point_formula}
and multiple places in each set~$P_i$.

Since $(X,\alpha)$ has finite combinatorial rank,
the sets~$P_i$ appearing in~\eqref{periodic_point_formula}
are all finite. Let~$T$ be the union of the sets~$P_i$,
and let~$\xi_v$ for~$v\in T$ be the image
of~$t$ in the appropriate
field~$\mathbb K(\mathfrak p)$.
Notice that each~$\xi_v$ has infinite multiplicative
order and we may assume~$|\xi_v|_v=1$.
For any finite non-empty set~$S\subset T$
write
\begin{equation}\label{Youdontknowhowluckyyouluckyyouareboy}
f_S(n)=\prod_{v\in S}|\xi_v^n-1|_v^{-1}.
\end{equation}
Thus~$f_T(n)
=\fix_{\alpha}(n)$.
Let~$\morder_v$ be the multiplicative
order of the image of~$\xi_v$ in the residue field~$\mathfrak{K}_v$ of~$v$.
For~$S\subset T$ and a set~$Q$
of rational primes, define
\[
S(Q)=\{v\in S\mid \ch(\mathfrak{K}_v)=p\mbox{ for some } p\in Q\}\subset S.
\]
If~$Q=\{p\}$, we also write~$S(p)$ for~$S(Q)$.
Consider
\[
\dirichlet_\alpha(z)=\zeta(z+1)^{-1}\sum_{n=1}^{\infty}\frac{f_T(n)}{n^{z+1}}.
\]
Let~$\morder_S=\lcm\{\morder_v\mid v\in S\}$.
Then the collection~$\{N_S,S\subset T\}$, defined by
\[
N_S=
\{n\in\mathbb{N}\mid\morder_S\divides
n\mbox{ and }\morder_{v}\notdivides n\mbox{ forall }v\in T\setminus S\},
\]
forms a partition of $\mathbb{N}$. Hence,
\begin{equation}\label{equation:eleanorrigby}
\dirichlet_\alpha(z)=\zeta(z+1)^{-1}\sum_{S\subset T}\sum_{n \in N_S}\frac{f_S(n)}{n^{z+1}},
\end{equation}
since~$f_S=f_T$ on~$N_S$ by Lemma~\ref{fundamental_evaluation_lemma}.

Let~$E=\mathbb{N}_0^{Q}$,
where
\begin{equation*}
Q=Q(S)=\{\ch(\mathfrak{K}_v)\mid v\in S\}.
\end{equation*}
For any $P\subset Q$, we extend the definition
of $\varphi_P$ given in the statement
of Theorem~\ref{theorem:exactorbitcountingformula} to
accomodate vectors $\mathbf{e}=(e_p)\in E$ by setting
\[
\varphi_P(\mathbf{e})=\prod_{p\in P}p^{e_p}.
\]
Each integer~$n\in N_S$ can be written in the form
\begin{equation}\label{integer_rewrite_equation}
n=\morder_S k \varphi_Q(\mathbf{e}),
\end{equation}
where~$k\in\mathbb{N}$ satisfies
\begin{equation}\label{permitted_integer_condition_for_primes}
p\notdivides k\mbox{ for all }p\in Q
\end{equation}
and
\begin{equation}\label{rough_permitted_integer_condition}
\morder_v\notdivides \morder_S k \varphi_Q(\mathbf{e})\mbox{ for all }v\in T\setminus S.
\end{equation}

It is more convenient to rephrase
\eqref{rough_permitted_integer_condition} according to
the following partition of~$E$. Let
\begin{equation}\label{first_value_for_large_enough_prime_powers}
e_0=\max\{\ord_p(\morder_v)\mid p\in Q, v\in T\setminus S\}
\end{equation}
and let~$\mathbf{e}_0\in E$ be the vector with each entry equal to~$e_0$.
For~$P\subset Q$, set
\[
E_P=\{\mathbf{e}\in E\mid e_p\geqslant e_0\mbox{ for all }
p\in P,e_p<e_0\mbox{ for all }p\in Q\setminus P\},
\]
so the sets~$\{E_P\mid P\subset Q\}$
partition~$E$. For each~$q$ in the finite
set~$\varphi_{Q\setminus P}(E_P)$, let
\[
E_P(q)=\{\mathbf{e}\in E_P\mid\varphi_{Q\setminus P}(\mathbf{e})=q\}.
\]
Then, for~$\mathbf{e}\in E_P(q)$,
\[
\morder_S\varphi_Q(\mathbf{e})=\morder_S \varphi_P(\mathbf{e}) q,
\]
and if~$v\in T\setminus S$,
\[
\gcd(\morder_v,\morder_S\varphi_Q(\mathbf{e}))
=
\gcd(\morder_v,\morder_S\varphi_P(\mathbf{e})q)
=
\gcd(\morder_v,\morder_S\varphi_P(\mathbf{e}_0)q).
\]
Hence, provided~$\mathbf{e}\in E_P(q)$,
condition \eqref{rough_permitted_integer_condition} becomes,
\begin{equation}\label{permitted_integer_condition_for_orders}
j\notdivides k\mbox{ for all }
j\in \{\morder_v/\gcd(\morder_v,\morder_S
\varphi_P(\mathbf{e}_0)q)\mid v\in T\setminus S\}.
\end{equation}

For~$n$ of the form \eqref{integer_rewrite_equation},
by Lemma~\ref{fundamental_evaluation_lemma},~$f_S(n)=
f_S(\morder_S \varphi_Q(\mathbf{e}))$. Furthermore,
using the partition of~$E$ given above, the inner sum in \eqref{equation:eleanorrigby} may be written
\begin{equation}\label{equation:picksuptherice}
\sum_{P\subset Q}
\sum_{q\in\varphi_{Q\setminus P}(E_P)}
\sum_{\mathbf{e}\in E_P(q)}
(\morder_S \varphi_Q(\mathbf{e}))^{-z-1}f_S(\morder_S \varphi_Q(\mathbf{e}))
\sum_{k}k^{-z-1},
\end{equation}
where~$k$ runs through all natural numbers
satisfying~\eqref{permitted_integer_condition_for_primes}
and~\eqref{permitted_integer_condition_for_orders}.
Using inclusion-exclusion, the inner sum
in~\eqref{equation:picksuptherice} is of the form
\begin{equation}\label{equation:inthechurchwhere}
\zeta(z+1)g(c^{-z},c\in \mathcal{C}),
\end{equation}
where~$\mathcal{C}\subset\mathbb{N}$ is a finite set of constants
and~$g$ is a multivariate polynomial with rational coefficients,
the monomial terms of which have total degree one.
Using~\eqref{equation:picksuptherice} and~\eqref{equation:inthechurchwhere},
by cancelling the zeta function with its reciprocal and
adjusting~$\mathcal{C}$ and~$g$ to absorb~$\morder_S^{-z-1}$, \eqref{equation:eleanorrigby} becomes
\[
\dirichlet_\alpha(z)=
\sum_{S\subset T}\sum_{P\subset Q}\sum_{q\in\varphi_{Q\setminus P}(E_P)}\dirichlet_{S,P,q}(z),
\]
where
\begin{equation}\label{equation:livesinadream}
\dirichlet_{S,P,q}(z)=
g(c^{-z},c\in \mathcal{C})
\sum_{\mathbf{e}\in E_P(q)}
\varphi_Q(\mathbf{e})^{-z-1}
f_S(\morder_S \varphi_Q(\mathbf{e})).
\end{equation}

Again using Lemma \ref{fundamental_evaluation_lemma}, for each~$p\in Q$, there exists~$e'_p$ such that
for all~$v\in S(p)$ and all~$\mathbf{e}\in E$ with~$e_p\geqslant e_p'$,
\begin{equation}\label{equation:waitsatthewindow}
f_v(\morder_S \varphi_Q(\mathbf{e}))=c_v p^{r_v\omega_v(e_p)},
\end{equation}
where~$c_v$ is a positive integer constant, $\omega_v(e_p)=e_p$ if the local field $\mathbb{K}_v$ corresponding to $v$ has zero characteristic and $\omega_v(e_p)=\eta_v' p^{e_p}$ for some constant $\eta_v' \in\mathbb{N}$, otherwise. Without loss of generality,
the steps already carried out may be performed with~$e_0$
replaced by the maximum of the value given by
\eqref{first_value_for_large_enough_prime_powers}
and~$\max\{e_p'\mid p\in Q\}$.

For any~$\mathbf{e}\in E_P(q)$,~$\varphi_Q(\mathbf{e})
=\varphi_P(\mathbf{e})q$,~so the summand in~\eqref{equation:livesinadream} is
\begin{equation}\label{equation:aweddinghasbeen}
(\varphi_P(\mathbf{e})q)^{-z-1}
f_{S(Q\setminus P)}(\morder_S q)
f_{S(P)}(\morder_S \varphi_P(\mathbf{e})).
\end{equation}
Therefore,
\[
\dirichlet_{S,P,q}(z)
=
g(c^{-z},c\in \mathcal{C})
\sum_{\mathbf{e}\in E_P(q)}
\varphi_P(\mathbf{e})^{-z-1}
f_{S(P)}(\morder_S \varphi_P(\mathbf{e})),
\]
where~$g$ and~$\mathcal{C}$ have been adjusted to accommodate the factor
\[
q^{-z-1}f_{S(Q\setminus P)}(\morder_S q),
\]
appearing in \eqref{equation:aweddinghasbeen}.
Using~\eqref{equation:waitsatthewindow}, it follows that
\begin{equation}\label{ifyoudriveacar}
\dirichlet_{S,P,q}(z)=
C\times g(c^{-z},c\in \mathcal{C})
\sum_{\mathbf{e}\in E_P(q)}
\frac{\varphi_P(\Omega(\mathbf{e})-\mathbf{e})}{\varphi_P(\mathbf{e})^z}
\end{equation}
where $C>0$ and $\Omega:\mathbb{N}_0^P\rightarrow \mathbb{N}_0^P$ is given by,
\[
\Omega(\mathbf{e})_p=\sum_{v\in S(p)}r_v\omega_v(e_p).
\]

By the definition of $E_P(q)$ and $\varphi_P$, the sum in \eqref{ifyoudriveacar} is
\begin{equation}\label{illtaxthestreet}
\mathop{\sum_{\mathbf{e}\in \mathbb{N}_0^P,}}_{e_p\geqslant e_0,\, p\in P}
\frac{\varphi_P(\Omega(\mathbf{e})-\mathbf{e})}{\varphi_P(\mathbf{e})^z}
 =
\sum_{\mathbf{e}\in \mathbb{N}_0^P}
\frac{\varphi_P(\Omega(\mathbf{e}+\mathbf{e}_0)-\mathbf{e}-\mathbf{e}_0)}
{\varphi_P(\mathbf{e}+\mathbf{e}_0)^z}
\end{equation}
where $\mathbf{e}_0\in\mathbb{N}_0^P$ is the vector with each entry
equal to $e_0$. For each $v\in S(p)$ with $\ch(\mathbb{K}_v)=p$, set $\eta_v=p^{e_0}\eta_v'$ and let  $\Lambda:\mathbb{N}_0^P\rightarrow \mathbb{N}_0^P$
be defined in terms of $\eta_v$ as in the
statement of Theorem~\ref{theorem:exactorbitcountingformula}. Then
\[
\Omega(\mathbf{e}+\mathbf{e}_0)-\mathbf{e}
=
\Lambda(\mathbf{e})+\mathbf{d},
\]
where $\mathbf{d}=(d_p)\in\mathbb{N}_0^{P}$ is given by
\[
d_p=e_0\mathop{\sum_{v\in S(p),}}_{\ch(\mathbb{K}_v)=0}r_v.
\]
Therefore, by \eqref{ifyoudriveacar} and \eqref{illtaxthestreet},
\begin{equation}\label{bangbangmaxwells}
\dirichlet_{S,P,q}(z)=
g(c^{-z},c\in \mathcal{C})
\sum_{\mathbf{e}\in \mathbb{N}_0^P}
\frac{\varphi_P(\Lambda(\mathbf{e}))}{\varphi_P(\mathbf{e})^z},
\end{equation}
where $g$ has been modified to absorb the factor
$C\varphi_P(\mathbf{d})\varphi_P(\mathbf{e}_0)^{-1-z}$. This concludes the proof of Theorem~\ref{theorem:exactorbitcountingformula}.

To prove Theorem~\ref{theorem:dirichletseriesarerational},
first note that since $X$ is assumed to be connected,
all places arise from number fields. Setting,
\[
r_{S,p}=\sum_{v\in S(p)}r_v,
\]
it follows from the definition of $\Lambda$ that
\[
\dirichlet_{S,P,q}(z)=
g(c^{-z},c\in \mathcal{C})
\sum_{\mathbf{e}\in \mathbb{N}_0^P}
\prod_{p\in P}p^{e_p(r_{S,p}-1-z)}.
\]
Therefore, the sum contains a geometric series
corresponding to each prime~$p\in P$. Hence
\begin{equation}\label{manihadadreadfulflight}
\dirichlet_{S,P,q}(z)=g(c^{-z},c\in
\mathcal{C})\prod_{p\in P}(1-p^{r_{S,p}-1-z})^{-1}.
\end{equation}
Since~$\dirichlet_\alpha(z)$ is a finite sum of expressions of this form,
this proves the theorem.

For the proof of the asymptotic results we will need
a specific description of the function~$g$ appearing
in~\eqref{bangbangmaxwells} in order to control
cancellation between terms of different signs
in various counting arguments. Notice that
for a finite set of positive integers $J$ with $1\notin J$,
\[
\sum_{I\subset J}\frac{(-1)^{\vert I\vert}}{c_I} > 0,
\]
where~$c_I=\lcm\{i\mid i\in I\}$.

\begin{lemma}\label{orshouldisay}
Suppose $g\neq 0$. Then
\[
g(c^{-z},c\in\mathcal{C})=C\times
\sum_{I\subset J}\frac{(-1)^{\vert I\vert}}{c_I}\,b_I^{-z},
\]
where~$C>0$,~$J$ is a finite set of positive integers with $1\notin J$
and~$b_I=Bc_I$ for each~$I\subset J$, for some fixed constant~$B\in\mathbb N$.
\end{lemma}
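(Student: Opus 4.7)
My plan is to trace back through the construction of $g$ leading up to~\eqref{bangbangmaxwells} and to identify each of its ingredients explicitly. The function $g$ originates from the innermost sum $\sum_{k} k^{-z-1}$ in~\eqref{equation:picksuptherice}, where $k$ runs over positive integers satisfying the divisibility conditions~\eqref{permitted_integer_condition_for_primes} and~\eqref{permitted_integer_condition_for_orders}. Both conditions take the form ``$j \nmid k$ for every $j$ in a finite set,'' so together they amount to $j \nmid k$ for all $j \in J_0$, where
\[
J_0 = Q \cup \bigl\{\morder_v / \gcd(\morder_v,\, \morder_S \varphi_P(\mathbf{e}_0) q) \mid v \in T \setminus S\bigr\}.
\]

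First, I would apply inclusion--exclusion to obtain
\[
\sum_{\substack{k \geqslant 1 \\ j \,\nmid\, k,\ \forall j \in J_0}} \frac{1}{k^{z+1}}
\;=\; \sum_{I \subset J_0} (-1)^{|I|} \sum_{c_I \mid k} \frac{1}{k^{z+1}}
\;=\; \zeta(z+1) \sum_{I \subset J_0} \frac{(-1)^{|I|}}{c_I^{z+1}},
\]
so, matching against~\eqref{equation:inthechurchwhere}, the initial form of $g$ is $\sum_{I \subset J_0} (-1)^{|I|} c_I^{-1} c_I^{-z}$. Next I would track the three subsequent absorptions into $g$ that the derivation records: the factor $\morder_S^{-z-1}$ immediately after~\eqref{equation:inthechurchwhere}, the factor $q^{-z-1} f_{S(Q\setminus P)}(\morder_S q)$ after~\eqref{equation:aweddinghasbeen}, and the factor $C\,\varphi_P(\mathbf{d})\,\varphi_P(\mathbf{e}_0)^{-1-z}$ after~\eqref{illtaxthestreet}. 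Each of these splits as $A a^{-z}$ with $A>0$ rational and $a$ a positive integer, by the definitions of $\morder_S$, $q$, $\varphi_P(\mathbf{e}_0)$, $\varphi_P(\mathbf{d})$, and $f_{S(Q\setminus P)}$. Multiplying by $A a^{-z}$ sends each $c_I^{-z}$ to $(a c_I)^{-z}$ up to the scalar $A$, so if $B \in \mathbb{N}$ is the product of the three $a$'s and $C>0$ is the product of the three $A$'s, I arrive at
\[
g(c^{-z}, c \in \mathcal{C}) \;=\; C \sum_{I \subset J_0} \frac{(-1)^{|I|}}{c_I}\, b_I^{-z}, \qquad b_I = B c_I.
\]

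It then remains to justify $1 \notin J$: if some element of $J_0$ were equal to $1$, the condition ``$1 \nmid k$'' could never be satisfied, so the innermost sum would be empty and $g$ would vanish identically, contradicting the hypothesis $g \neq 0$. Hence every element of $J_0$ is at least $2$, and I may take $J = J_0$. The argument is essentially bookkeeping; the only step requiring real attention is verifying that each of the three absorption factors splits as $A a^{-z}$ with $A > 0$ and $a \in \mathbb{N}$, which is immediate from the definitions but is what guarantees that $B$ is a positive integer and $C$ is a positive constant.
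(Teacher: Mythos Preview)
Your proposal is correct and follows precisely the approach the paper indicates: the paper's entire proof of this lemma is the single sentence ``This may be seen by going through the steps in the proof above,'' and you have carried out exactly that bookkeeping, correctly identifying the inclusion--exclusion origin of $g$, the three successive absorptions (each of the form $A a^{-z}$ with $A>0$ and $a\in\mathbb N$), and the reason why $g\neq 0$ forces $1\notin J$.
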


This may be seen by going through the steps in the
proof above.

\section{Proof of Theorem~\ref{rank_one_theorem}}

By
Lemma~\ref{fundamental_evaluation_lemma}, we have
constants~$\morder_v$,~$a_1,\dots,a_d$,~$D$ and a prime~$p$ with
\[
\fix_{\alpha}(n)=\begin{cases}
1&\morder_v\notdivides n;\\
a_e&n=\morder_vp^ek,\mbox{ where }p\notdivides k, 1\le e\le d;\\
Dp^{r_ve}&n=\morder_vp^ek,p\notdivides k, e>d.
\end{cases}
\]
By the method used in Section~\ref{sectionproofofrationality},
we deduce that
\[
\dirichlet_{\alpha}(z)=
1-\morder_v^{-z-1}+
\sum_{e=1}^d a_e \left(\morder_vp^e\right)^{-z-1}
+\frac{cb^{-z}}{1-p^{m-z}}\left(1-\frac{1}{p^{z+1}}\right)
\]
where~$b=\morder_vp^{d+1}$,~$c=\frac{D}{\morder_v}p^{(d+1)(r_v-1)}$
and~$m=r_v-1$.
It follows that -- up to an error uniformly bounded in~$N$ --
we can compute~$\pi_{\alpha}(N)$ by considering the dominant
term
\begin{equation}\label{letmetakeyoudowncosImgoingto}
\frac{b^{-z}}{1-p^{m-z}}\left(1-\frac{1}{p^{z+1}}\right)
=\frac{b^{-z}}{1-p^{m-z}}-
\frac{(bp)^{-z}}{p(1-p^{m-z})}.
\end{equation}
The first term in~\eqref{letmetakeyoudowncosImgoingto}
contributes
\begin{equation}\label{strawberryfields}
\sum_{e\mid bp^e\le N}p^{me}=\sum_{e=0}^{\lfloor
\log_p(N/b)\rfloor}p^{me}
\end{equation}
and the second term in~\eqref{letmetakeyoudowncosImgoingto}
contributes
\begin{eqnarray}
\frac{1}{p}\sum_{e\mid bp^{e+1}\le N}p^{me}&=&
\frac{1}{p}\sum_{e=1}^{\lfloor
\log_p(N/b)\rfloor}p^{m(e-1)}\nonumber\\
&=&p^{-m-1}\left(\sum_{e=0}^{\lfloor
\log_p(N/b)\rfloor}p^{me}\right)-p^{-m-1}.\label{repeatstrawberryfields}
\end{eqnarray}
The total contribution from~\eqref{strawberryfields}
and~\eqref{repeatstrawberryfields} is therefore
\begin{equation}\label{nothingisreal}
\left(1-p^{-m-1}\right)\sum_{e=0}^{\lfloor
\log_p(N/b)\rfloor}p^{me}+\bigo(1).
\end{equation}

If~$r_v=1$, then~$m=0$ and~\eqref{nothingisreal}
becomes,
\[
\left(1-\frac{1}{p}\right)
\log_p(N/b)+\bigo(1)=
\frac{p-1}{p\log p}\log N+\bigo(1),
\]

If~$r_v>1$, then~$m>0$ and~\eqref{nothingisreal}
becomes,
\begin{eqnarray*}
C\sum_{e=0}^{\lfloor
\log_p(N/b)\rfloor}p^{me}+\bigo(1)&=&
C\cdot
\frac{p^{m(1+\lfloor\log_p(N/b)\rfloor)}-1}{p^m-1}+\bigo(1)\\
&=&\delta(N)N^m+\bigo(1)
\end{eqnarray*}
for some constant~$C$ and a function~$\delta$ satisfying the
requirements of Theorem~\ref{rank_one_theorem}.

\section{Proof of Theorem~\ref{wheniwakeupearlyinthemorning}}

Since $X$ is connected, $\dirichlet_\alpha$ has a rational
expression which is a finite sum of terms of the
form~\eqref{manihadadreadfulflight}.
By Lemma~\ref{orshouldisay},
each such term may be written as
\[
F(z)
=
C\times G(z)\sum_{I\subset J}\frac{(-1)^{\vert I\vert}}{c_I}\,b_I^{-z},
\]
where
\[
G(z)=\prod_{p\in P}(1-p^{n_p-z})^{-1}
\]
for some $\mathbf{n}=(n_p)\in\mathbb{N}_0^P$. Let
\[
m=\max\{n_p:p\in P\}
\mbox{ and }
L=\left\vert\{p\in P\mid n_p=m\}\right\vert.
\]
Note that the Dirichlet series for~$G$ has abscissa of convergence~$m$ and~$G$
has a pole of order~$L$ at~$z=m$.

By showing that the coefficient
of~$(z-m)^{-L}$ in the Laurent series for $F$ is positive, it will follow
that for at least one such $F$, $m=\sigma$, $L=K$ and that there can be no $F$ for
which $m>\sigma$ nor any $F$ for which $m=\sigma$ and $L>K$.

The coefficient of~$(z-m)^{-L}$
in the Laurent series for $G$ about $m$ is~$\kappa=\prod_{p\in P}\kappa_p$
where
\[
\kappa_p
=
\begin{cases}
(\log p)^{-1} & \mbox{ if } n_p=m\\
(1-p^{n_p-m})^{-1} & \mbox{ if } n_p<m
\end{cases}
\]
Hence, the coefficient of~$(z-m)^{-L}$ in the Laurent series for~$F$ is
\begin{eqnarray*}
C \kappa\sum_{I\subset J}\frac{(-1)^{\vert I\vert}}{c_I b_I^m}
& = &
\frac{C\kappa}{B^m} \sum_{I\subset J}\frac{(-1)^{\vert I\vert}}{(\lcm\{i:i\in I\})^{m+1}}\\
& = &
\frac{C\kappa}{B^m} \sum_{I\subset J}\frac{(-1)^{\vert I\vert}}{(\lcm\{i^{m+1}:i\in I\})}\\
& = &
\frac{C\kappa}{B^m} \sum_{I\subset J'}\frac{(-1)^{\vert I\vert}}{(\lcm\{i:i\in I\})},
\end{eqnarray*}
where $J'=\{j^{m+1}:j\in J\}$. Since the constants $\kappa,B,C$ and the result of the sum are all
positive, so too is the coefficient of~$(z-m)^{-L}$ in the Laurent series for~$F$.

Now consider the contribution to~$\pi_\alpha$ arising from~$F$.
For any~$\mathbf{m}\in\mathbb{Z}^P$
and~$x\geqslant 0$, set
\[
S_x^P(\mathbf{m})=\mathop{\sum_{\mathbf{e}\in\mathbb{N}_0^P,}}_{\varphi_P(\mathbf{e})\leqslant x}
\varphi_P(\mathbf{m}\mathbf{e}),
\]
where integer vectors are multiplied term-by-term. Extracting the
coefficients from the Dirichlet series for~$F$, it follows that~$F$ contributes
\begin{equation}\label{ifyoutrytosit}
C\times
\sum_{I\subset J}\frac{(-1)^{\vert I\vert}}{c_I}
S_{N/b_I}^P(\mathbf{n})
\end{equation}
to $\pi_\alpha(N)$. This expression will be the main tool
for obtaining our asymptotics.

The following result gives a Chebychev estimate for
the individual terms $S_{N/b_I}^P(\mathbf{n})$.

\begin{lemma}\label{bethankfulidonttakeitall}
Let $\mathbf{m}=(m_p)\in\mathbb{Z}^P$ and $x\geqslant 0$.
Then there exist constants~$B\geqslant A>0$ and~$x_0\geqslant 0$
such that for all~$x>x_0$,
\[
A x^m(\log x)^j
\leqslant
S_x^P(\mathbf{m})
\leqslant
B x^m(\log x)^j,
\]
where $m=\max\{0,m_p:p\in P\}$ and
\[
j=
\begin{cases}
\left\vert\{p\in P\mid m_p=m\}\right\vert & \mbox{if }m=0,\\
\left\vert\{p\in P\mid m_p=m\}\right\vert-1 & \mbox{if }m>0.
\end{cases}
\]
\end{lemma}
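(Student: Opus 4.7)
The plan is to prove the lemma by induction on $|P|$. For the base case $|P|=1$, writing $P=\{q\}$ reduces $S_x^P(\mathbf{m})$ to the geometric series $\sum_{e=0}^{\lfloor \log_q x\rfloor} q^{m_q e}$, evaluated in three sub-cases: $m_q<0$ gives a bounded contribution (matching $m=0$, $j=0$); $m_q=0$ gives $\asymp\log x$ (matching $m=0$, $j=1$); and $m_q>0$ gives $\asymp q^{m_q\lfloor\log_q x\rfloor}\asymp x^{m_q}$ (matching $m=m_q$, $j=0$). Each of these matches the claimed estimate.

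For the inductive step I would fix $P'=P\setminus\{q\}$ for a distinguished prime $q\in P$ and separate the $e_q$ variable via the identity
\[
S_x^P(\mathbf{m}) \;=\; \sum_{e_q=0}^{\lfloor \log_q x\rfloor} q^{m_q e_q}\, S_{x/q^{e_q}}^{P'}(\mathbf{m}'),
\]
where $\mathbf{m}'$ is the restriction of $\mathbf{m}$ to $P'$. Write $P_m=\{p\in P:m_p=m\}$. When $j\ge 1$ (which happens either when $m>0$ and $|P_m|\ge 2$, or when $m=0$ and some $m_p=0$), I would choose $q$ with $m_q=m$, so that $P'$ still attains the maximum $m$ with multiplicity one less. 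The induction hypothesis then gives $S_y^{P'}(\mathbf{m}')\asymp y^m(\log y)^{j-1}$ for $y$ above some threshold $x_0'$, and the factors $q^{me_q}$ and $(x/q^{e_q})^m$ cancel to leave
\[
S_x^P(\mathbf{m})\;\asymp\; x^m\sum_{e_q=0}^{\lfloor\log_q x\rfloor}\left(\log(x/q^{e_q})\right)^{j-1},
\]
which is a Riemann sum for $\int_0^{\log x}y^{j-1}\,dy\asymp(\log x)^j$. The finitely many $e_q$ for which $x/q^{e_q}<x_0'$ contribute only $O(x^m)$, absorbed into the main term.

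The remaining case $j=0$ splits further. If $m=0$ and every $m_p<0$, then $S_x^P(\mathbf{m})$ is monotone increasing in $x$ and bounded above by the convergent product $\prod_{p\in P}(1-p^{m_p})^{-1}$, so $S_x^P\asymp 1$. I expect the main obstacle to be the remaining possibility $m>0$ with $|P_m|=1$. Here I would take $q$ to be the unique element of $P_m$, so that the inductive maximum $m'=\max\{0,m_p:p\in P'\}$ satisfies $m'<m$. Substituting $f=\lfloor\log_q x\rfloor-e_q$ and applying the induction hypothesis rewrites the main contribution as
\[
x^{m'}\,q^{(m-m')\lfloor\log_q x\rfloor}\sum_{f\ge 0}q^{-(m-m')f}\bigl(r+f\log q\bigr)^{j'},
\]
with $r=\log x-\lfloor\log_q x\rfloor\log q\in[0,\log q)$. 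Since $m-m'>0$ dominates the polynomial factor in $f$, the series converges absolutely, while the prefactor is $\asymp x^m$, which yields the upper bound. For the lower bound, a single term at $e_q=\lfloor\log_q x\rfloor-L$, with $L$ chosen so that $q^L\ge x_0'$, already contributes $\asymp x^m$ by the inductive lower bound applied to $S_{x/q^{\lfloor\log_q x\rfloor-L}}^{P'}(\mathbf{m}')$, completing the argument.
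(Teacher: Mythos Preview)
Your argument is correct. Both proofs remove a prime~$q$ with~$m_q=m$ and induct on~$|P|$, but the organization differs. The paper first disposes of the case~$m=0$ directly, using the box inclusions
\[
\bigl\{\mathbf{e}:e_p\le |P|^{-1}\log_p x\ \forall p\bigr\}
\subset\bigl\{\mathbf{e}:\varphi_P(\mathbf{e})\le x\bigr\}
\subset\bigl\{\mathbf{e}:e_p\le \log_p x\ \forall p\bigr\},
\]
and then, for~$m>0$, sums over the~$P'$-coordinates \emph{first}: the inner geometric sum in~$e_q$ is~$\asymp x^m\varphi_{P'}(\mathbf{e})^{-m}$, which converts the outer sum into~$x^mS_x^{P'}(\mathbf{m}'-m\mathbf{1})$ and lands back in the already-proved~$m=0$ case, with the residual term~$S_x^{P'}(\mathbf{m}')$ handled by the induction hypothesis. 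By contrast, you sum over~$e_q$ first and apply the inductive estimate to each~$S^{P'}_{x/q^{e_q}}$, which forces the additional case split on~$j$ (Riemann-sum argument when~$j\ge1$, convergent tail series when~$j=0$ and~$m>0$). The paper's shift~$\mathbf{m}'\mapsto\mathbf{m}'-m\mathbf{1}$ is slicker and avoids that split, while your route is more hands-on and makes the leading-order mechanism in each regime explicit; neither gains or loses anything substantive.
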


\begin{proof}
First suppose $m=0$. For any $\mathbf{e}\in\mathbb{N}_0^P$,
\[
e_p\leqslant |P|^{-1}\log_p x\mbox{ for all }p\in P
\Rightarrow
\varphi_P(\mathbf{e})\leqslant x.
\]
Therefore,
\begin{eqnarray*}
S_x^P(\mathbf{m})
& \geqslant &
\prod_{p\in P}\sum_{e_p=0}^{\lfloor|P|^{-1}\log_p x\rfloor}p^{m_pe_p}\\
& \geqslant &
A(\log x)^j,
\end{eqnarray*}
for some positive constant $A$. On the other hand,
for any $\mathbf{e}\in\mathbb{N}_0^P$,
\[
\varphi_P(\mathbf{e})\leqslant x
\Rightarrow
e_p\leqslant \log_p x\mbox{ for all }p\in P.
\]
So,
\begin{eqnarray*}
S_x^P(\mathbf{m})
& \leqslant &
\prod_{p\in P}\sum_{e_p=0}^{\lfloor\log_p x\rfloor}p^{m_pe_p}\\
& \leqslant &
B(\log x)^j,
\end{eqnarray*}
for some positive constant $B\geqslant A$.

Now suppose $m>0$. The proof is by induction on $|P|$. If $|P|=1$ then the result is obvious, so assume $|P|>1$.
Choose $q\in P$ such that $m_q=m$, set~$P'=P\setminus\{q\}$
and~$\mathbf{m}'=(m_p)_{p\in P'}$. Write
\begin{equation}\label{shouldfivepercentappeartosmall}
S_x^P(\mathbf m) =
\mathop{\sum_{\mathbf{e}\in\mathbb{N}_0^{P'},}}
_{\varphi_{P'}(\mathbf{e})\leqslant x}
\varphi_{P'}(\mathbf{m'}\mathbf{e})
\sum_{e_q=0}^{\lfloor\log_q (x/\varphi_{P'}(\mathbf{e}))\rfloor}
q^{m e_q}.
\end{equation}
The inner sum in~\eqref{shouldfivepercentappeartosmall} is a geometric series
with sum
\[
\frac{q^{m(1+\lfloor\log_q
x/\varphi_{P'}(\mathbf{e})\rfloor)}-1}{q^{m}-1}.
\]
This expression is bounded above by
\[
C_1 x^m\varphi_{P'}(-m\mathbf{e})-C_2
\]
for some constants $C_1,C_2>0$. It
follows from~\eqref{shouldfivepercentappeartosmall} and the $m=0$ case that
\begin{eqnarray*}
S_x^P(\mathbf{m})
& \leqslant &
C_1 x^m S_x^{P'}(\mathbf{m}'-m\mathbf{1})-C_2 S_x^{P'}(\mathbf{m}')\\
& \leqslant &
C_3 x^m (\log x)^j-C_2 S_x^{P'}(\mathbf{m}'),
\end{eqnarray*}
for some constant~$C_3>0$. Upon noting that either
\begin{enumerate}
\item $\max\{m_p':p\in P'\}<m$ or
\item $\max\{m_p':p\in P'\}=m$
 and $\left\vert\{p\in P'\mid m_p'=m\}\right\vert-1<j$,
\end{enumerate}
this completes the inductive step. A similar argument also gives the lower bound.
\end{proof}

\subsection{Proof of Theorem~\ref{wheniwakeupearlyinthemorning}(1)}\label{theyvebeengoinginandoutofstyle}

In this case all expressions of the form~\eqref{ifyoutrytosit}
contributing to $\pi_\alpha(N)$ have $\mathbf{n}=\mathbf{0}$. Example~\ref{exampletwo} illustrates
some of the issues that arise in this setting when more than one prime is involved.

\begin{example}\label{threecapfive}
(Example~\ref{exampletwo} revisited)
Let~$\alpha$ be the map dual to~$x\mapsto 2x$ on~$\mathbb{Z}_{(3)}\cap\mathbb{Z}_{(5)}$, so
\[
\fix_{\alpha}(n)=\vert2^n-1\vert_3^{-1}\vert2^n-1\vert_5^{-1}.
\]
The term in~$\dirichlet_{\alpha}$ that determines the
asymptotic growth in~$\pi_{\alpha}$ is
\[
15\left(
\frac{1}{4^{z+1}}-\frac{1}{12^{z+1}}-\frac{1}{20^{z+1}}
+\frac{1}{60^{z+1}}
\right)
\frac{1}{(1-3^{-z})(1-5^{-z})}.
\]
Notice that each term of the form
\[
\frac{b^{-z}}{(1-3^{-z})(1-5^{-z})}
\]
with~$b>1$ contributes
\begin{eqnarray*}
S_N
& = &
\sum_{e_1=1}^{\lfloor\log_3(N/b)\rfloor}
\underbrace{\sum_{e_2=0}^{\lfloor\log_5(N/b3^{e_1})\rfloor}
1}_{=\frac{\log N}{\log5}-\frac{\log3}{\log5}e_1+\bigo(1)}\\
& = &
\frac{\log N}{\log5}\sum_{e_1=1}^{\lfloor\log_3(N/b)\rfloor} 1
-\frac{\log3}{\log5}
\sum_{e_1=1}^{\lfloor\log_3(N/b)\rfloor} e_1
+\bigo(\log N)\\
& = &
\frac{(\log N)^2}{\log3\log5}-\frac{\log3}{2\log5}
\lfloor\log_3(N/b)\rfloor^2
+ \bigo(\log N)\\
& = &
\frac{(\log N)^2}{\log3\log5}\left(1-\frac{1}{2}\right)
+ \bigo(\log N)
\end{eqnarray*}
to~$\pi_{\alpha}(N)$.
Summing over all the terms therefore gives
\begin{eqnarray*}
\pi_{\alpha}(N)&=&
\frac{15}{4}\left(
1-\frac{1}{3}-\frac{1}{5}+\frac{1}{15}
\right)
\frac{(\log N)^2}{2\log3\log5}
+\bigo(\log N)\\
&=&\frac{(\log N)^2}{\log3\log5}+\bigo(\log N).
\end{eqnarray*}
\end{example}

Returning to the general case, we will use
an induction argument to obtain an exact
asymptotic for~$S_x^P(\mathbf{0})$. The following lemma
provides the essential inductive step, but is more general than we need at this stage
(the full statement being needed for the proof of
Theorem~\ref{wheniwakeupearlyinthemorning}(3)). The result is
a little technical and requires some preparation.
Suppose~$\mathbf{m}=(m_p)\in\mathbb{Z}^P$
satisfies~$m_p\leqslant 0$ for all~$p\in P$. Set
\begin{equation}\label{guaranteedtoraiseasmile}
\overline{P}=\{p\in P:m_p<0\}.
\end{equation}
Let $\overline{P}\subseteq W \subseteq P$, $\mathbf{d}\in\mathbb{N}_0^W$
and define
$\widetilde{\mathbf{d}}=(\widetilde{d_p})\in\mathbb{N}_0^W$ by
\begin{equation}\label{somayiintroducetoyou}
\widetilde{d_p}=
\begin{cases}
0 & \mbox{ if } p\in\overline{P},\\
d_p & \mbox{ if } p\not\in\overline{P}.
\end{cases}
\end{equation}
Set
\begin{equation}\label{sgtpeppertaughtthebandtoplay}
T_x^W(\mathbf{m},\mathbf{d},k)
=
(\log x)^{-k}
\mathop{\sum_{\mathbf{e}\in\mathbb{N}_0^W,}}_{\varphi_W(\mathbf{e})\leqslant x}
\varphi_W(\mathbf{m}'\mathbf{e})\mathbf{e}^{\mathbf{d}},
\end{equation}
where $\mathbf{m}'=(m_p)_{p\in W}$.

Notice that
\begin{eqnarray}
\nonumber
T_x^W(\mathbf{m},\mathbf{d},k)
& \leqslant &
(\log x)^{-k}\prod_{p\in W}\sum_{e_p=0}^{\lfloor\log_p x\rfloor}p^{m_p e_p}e_p^{d_p}\\
\nonumber
& \leqslant &
C(\log x)^{-k}\prod_{p\in W\setminus\overline{P}}(\log x)^{d_p+1}\\
\label{illtaxyourfeet}
& = &
C(\log x)^{\widetilde{\mathbf{d}}\cdot\mathbf{1}+|W\setminus\overline{P}|-k}
\end{eqnarray}

\begin{lemma}\label{nowmyadviceforthosewhodie}
Suppose that $W\neq\overline{P}$ and $W'=W\setminus\{s\}$ for some $s\in W\setminus\overline{P}$.
If $\widetilde{\mathbf{m}}=\mathbf{0}$, $\mathbf{d}=\widetilde{\mathbf{d}}$,  $k=\mathbf{d}\cdot\mathbf{1}+|W\setminus\overline{P}|$ then there
exist a finite indexing set $H$ and set of constants $\{c_h:h\in H\}$,
both independent of~$x$, such that
\[
T_x^W(\mathbf{m},\mathbf{d},k)
=
\left(
\sum_{h\in H}
c_h T_x^{W'}(\mathbf{m},\mathbf{d}(h),k_h)
\right)
+\bigo\left(\frac{1}{\log x}\right),
\]
where for each $h\in H$, $\mathbf{d}(h)\in\mathbb{N}_0^{W'}$ and $k_h\in\mathbb{N}_0$ satisfy
\begin{enumerate}
\item $\mathbf{d}(h)=\widetilde{\mathbf{d}(h)}$,
\item $k_h=\mathbf{d}(h)\cdot\mathbf{1}+|W'\setminus\overline{P}|$.
\end{enumerate}
\end{lemma}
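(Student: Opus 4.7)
The plan is to peel the summation over $e_s$ off the outer sum, evaluate the resulting inner sum exactly by Faulhaber's formula, and identify which resulting pieces match the target shape $T_x^{W'}(\mathbf{m},\mathbf{d}(h),k_h)$, absorbing everything else into the $\bigo(1/\log x)$ error. Writing $\mathbf{e}=(\bar{\mathbf{e}},e_s)$ with $\bar{\mathbf{e}}\in\mathbb{N}_0^{W'}$ and setting $\bar{\mathbf{m}}=(m_p)_{p\in W'}$, $\bar{\mathbf{d}}=(d_p)_{p\in W'}$, the hypothesis $\widetilde{\mathbf{m}}=\mathbf{0}$ together with $s\in W\setminus\overline{P}$ forces $m_s=0$, so $\varphi_W(\mathbf{m}'\mathbf{e})=\varphi_{W'}(\bar{\mathbf{m}}\bar{\mathbf{e}})$ is independent of $e_s$ and the constraint $\varphi_W(\mathbf{e})\leqslant x$ translates to $0\leqslant e_s\leqslant N_s:=\lfloor\log_s(x/\varphi_{W'}(\bar{\mathbf{e}}))\rfloor$. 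Peeling off the $e_s$-summation leaves the inner sum $Q(N_s):=\sum_{e_s=0}^{N_s}e_s^{d_s}$, a polynomial in $N_s$ of degree exactly $d_s+1$ with leading coefficient $1/(d_s+1)$.

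Set $A=\log x$ and $B(\bar{\mathbf{e}})=\log\varphi_{W'}(\bar{\mathbf{e}})$, so that $N_s=(A-B)/\log s+\bigo(1)$. Substituting into $Q$, applying the binomial theorem to $(A-B)^j$, and expanding $B^l=\bigl(\sum_{p\in W'}e_p\log p\bigr)^l$ multinomially as $\sum_{|\mathbf{a}|=l}\lambda_{\mathbf{a}}\bar{\mathbf{e}}^{\mathbf{a}}$ over $\mathbf{a}\in\mathbb{N}_0^{W'}$, one rewrites $Q(N_s)$ as a finite $\mathbb{Q}$-linear combination of expressions $A^{j-l}\bar{\mathbf{e}}^{\mathbf{a}}$ with $|\mathbf{a}|=l\leqslant j\leqslant d_s+1$, plus a correction of strictly smaller total order in $A-B$ coming from the $\bigo(1)$ slack in $N_s$. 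Reinserting and dividing by $(\log x)^k$ displays $T_x^W(\mathbf{m},\mathbf{d},k)$ as a finite linear combination of putative summands, each of shape $T_x^{W'}(\mathbf{m},\bar{\mathbf{d}}+\mathbf{a},k-(j-l))$.

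Checking conditions (1) and (2) against these candidates pins down the survivors. Condition (2), $k_h=\mathbf{d}(h)\cdot\mathbf{1}+|W'\setminus\overline{P}|$, forces $j=d_s+1$; condition (1), $\mathbf{d}(h)=\widetilde{\mathbf{d}(h)}$, forces the support of $\mathbf{a}$ to lie in $W'\setminus\overline{P}$ (since $\bar{\mathbf{d}}$ already vanishes on $\overline{P}$ by $\mathbf{d}=\widetilde{\mathbf{d}}$). The finite set $H$ consists of these surviving $\mathbf{a}$, and the constants $c_h$ are assembled from the Faulhaber, binomial, and multinomial coefficients. All remaining candidates are absorbed into the error via the direct estimate
\[
\sum_{\bar{\mathbf{e}}:\,\varphi_{W'}(\bar{\mathbf{e}})\leqslant x}\prod_{p\in W'}p^{m_pe_p}e_p^{d_p+a_p}\;\leqslant\;\prod_{p\in\overline{P}}\sum_{e_p\geqslant 0}p^{m_pe_p}e_p^{a_p}\;\cdot\;\prod_{p\in W'\setminus\overline{P}}\sum_{e_p=0}^{\lfloor\log_p x\rfloor}e_p^{d_p+a_p},
\]
whose first product converges thanks to $m_p<0$ on $\overline{P}$ and whose second is $\bigo\bigl((\log x)^{\sum_{p\in W'\setminus\overline{P}}(d_p+a_p+1)}\bigr)$. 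A direct arithmetic of $\log x$-exponents, using $k=\mathbf{d}\cdot\mathbf{1}+|W\setminus\overline{P}|$, then shows that each offending term is $\bigo(1/\log x)$.

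The main obstacle is precisely this final bookkeeping: one must verify \emph{uniformly} that for sub-leading Faulhaber coefficients, for the $\bigo(1)$ slack in $N_s$, and for multinomial terms whose $\mathbf{a}$ has support meeting $\overline{P}$, the resulting exponent of $\log x$ drops by at least one. The decisive leverage is the hypothesis $\widetilde{\mathbf{m}}=\mathbf{0}$: it guarantees each prime of $W\setminus\overline{P}$ contributes a full power of $\log x$ on its own, so the only way a candidate can reach the maximal order $\bigo(1)$ is by pairing the top-order Faulhaber coefficient with an $\mathbf{a}$ whose support avoids $\overline{P}$.
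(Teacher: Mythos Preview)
Your proposal is correct and follows essentially the same route as the paper: peel off the $e_s$-summation (using $m_s=0$), replace $\sum_{e_s\le N_s}e_s^{d_s}$ by its leading term $C N_s^{d_s+1}$ (discarding lower Faulhaber terms into the error), drop the floor in $N_s$, multinomially expand $(\log x-\sum_{p\in W'}e_p\log p)^{d_s+1}$, and then cull those multinomial pieces whose $\mathbf a$ touches $\overline P$ into the $\bigo(1/\log x)$ error via the product bound you wrote (which is exactly the paper's inequality~\eqref{illtaxyourfeet}). The only cosmetic difference is that the paper throws away the sub-leading Faulhaber coefficients \emph{before} expanding, so every surviving term automatically satisfies condition~(2), whereas you expand everything first and then observe that condition~(2) forces $j=d_s+1$; the bookkeeping and the final indexing set $H$ come out identical.
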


\begin{proof}
If $W'=\varnothing$ the result is obvious, so assume $W'\neq\varnothing$.
By assumption $m_s=0$, so
\begin{equation}\label{dontaskmewhatiwantitfor}
T_x^W(\mathbf{m},\mathbf{d},k)
=
(\log x)^{-k}
\mathop{\sum_{\mathbf{e}\in\mathbb{N}_0^{W'},}}_{\varphi_{W'}(\mathbf{e})\leqslant x}
\varphi_{W'}(\mathbf{m}'\mathbf{e})\mathbf{e}^{\mathbf{d}'}
\sum_{e_s=0}^{\lfloor\log_s(x/\varphi_{W'}(\mathbf{e}))\rfloor}e_s^{d_s},
\end{equation}
where $\mathbf{m}'=(m_p)_{p\in W'}$ and $\mathbf{d}'=(d_p)_{p\in W'}$.
Expanding the inner sum,
\begin{eqnarray}
\nonumber
\sum_{e_s=0}^{\lfloor\log_s (x/\varphi_{W'}(\mathbf e))\rfloor}
e_s^{d_s}
& = &
C\lfloor\log_s x/\varphi_{W'}(\mathbf e)\rfloor^{d_s+1}+
\bigo\left((\log x)^{d_s}\right)\\
\nonumber
& = &
C\left(\log_s x/\varphi_{W'}(\mathbf e)\right)^{d_s+1}+
\bigo\left((\log x)^{d_s}\right)\\
\nonumber
& = &
C\left(\log x-\mathbf e\cdot(\log p)_{p\in W'}\right)^{d_s+1}+
\bigo\left((\log x)^{d_s}\right)\\
\label{ifyoutakeawalk}
& = &
\sum_{h\in H}c_h(\log x)^{k_h'}\mathbf e^{\mathbf{d}'(h)}
+\bigo\left((\log x)^{d_s}\right),
\end{eqnarray}
where the finite indexing set $H$ and set of constants $\{c_h:h\in H\}$ are
both independent of $x$, and $k_h\in\mathbb{N}_0$, $\mathbf{d}'(h)\in\mathbb{N}_0^{W'}$ satisfy
\begin{equation}\label{ifyougettoocold}
k_h'+\mathbf{d}'(h)\cdot\mathbf{1}=d_s+1
\end{equation}
Let $k_h=k-k_h'$ and $\mathbf{d}(h)=\mathbf{d}'+\mathbf{d}'(h)$. Substituting~\eqref{ifyoutakeawalk}
into~\eqref{dontaskmewhatiwantitfor} gives
\begin{equation}\label{ifyoudontwanttopaysomemore}
T_x^W(\mathbf{m},\mathbf{d},k)
=
\left(
\sum_{h\in H'}
c_h T_x^{W'}(\mathbf{m},\mathbf{d}(h),k_h)
\right)
+\bigo\left(\frac{1}{\log x}\right);
\end{equation}
the error term being obtained using~\eqref{illtaxyourfeet}, since
\begin{eqnarray*}
(\log x)^{d_s-k}
\mathop{\sum_{\mathbf{e}\in\mathbb{N}_0^{W'},}}_{\varphi_{W'}(\mathbf{e})\leqslant x}
\varphi_{W'}(\mathbf{m}'\mathbf{e})\mathbf{e}^{\mathbf{d}'}
& = &
\bigo
\left(
(\log x)^{{\widetilde{\mathbf{d}'}\cdot\mathbf{1}+|W'\setminus\overline{P}|+d_s-k}}
\right)\\
& = &
\bigo
\left(
(\log x)^{{\widetilde{\mathbf{d}}\cdot\mathbf{1}+|W\setminus\overline{P}|-k-1}}
\right)\\
& = &
\bigo
\left(
(\log x)^{k-k-1}
\right).
\end{eqnarray*}

It remains to see that conditions (1) and (2) are satisfied. To see that (2)
holds, note~\eqref{ifyougettoocold} implies
\begin{eqnarray*}
k-k_h+(\mathbf{d}(h)-\mathbf{d}')\cdot\mathbf{1}
& = &
d_s+1\\
\Rightarrow
k_h
& = &
\mathbf{d}(h)\cdot\mathbf{1}+k-\mathbf{d}\cdot\mathbf{1}-1\\
& = &
\mathbf{d}(h)\cdot\mathbf{1}+|W'\setminus\overline{P}|,
\end{eqnarray*}
since by hypothesis $k-\mathbf{d}\cdot\mathbf{1}=|W\setminus\overline{P}|$.

To satisfy condition~(1),
we show that~$H$ may simply be replaced by
\[
\{h\in H:\mathbf{d}(h)=\widetilde{\mathbf{d}(h)}\},
\]
in~\eqref{ifyoudontwanttopaysomemore}.
To see this, note that
if~$\mathbf{d}(h)\neq\widetilde{\mathbf{d}(h)}$
then~$\widetilde{\mathbf{d}(h)}\cdot 1\leqslant\mathbf{d}(h)\cdot\mathbf{1}-1$ and so
\begin{eqnarray*}
\widetilde{\mathbf{d}(h)}\cdot
\mathbf{1}+|W'\setminus\overline{P}|-k_h
& \leqslant &
\mathbf{d}(h)\cdot\mathbf{1}+|W'\setminus
\overline{P}|-k_h-1\\
& \leqslant &
-1,
\end{eqnarray*}
since (2) holds. Therefore,
\[
T_x^{W'}(\mathbf{m},\mathbf{d}(h),k_h) =
\bigo\left(\frac{1}{\log x}\right)
\]
by \eqref{illtaxyourfeet}.
\end{proof}

Let $L=|P|$. Since
\[
(\log x)^{-L}S_x^P(\mathbf{0})=T_x^{W'}(\mathbf{0},\mathbf{0},L),
\]
applying Lemma~\ref{nowmyadviceforthosewhodie} and induction, we obtain
\[
(\log x)^{-L}S_x^P(\mathbf{0}) = C_1+\bigo\left(\frac{1}{\log x}\right)
\]
for some constant $C_1$ which is independent of $x$.
Notice that $C_1>0$, otherwise there would be a
contradiction to the lower bound
provided by Lemma~\ref{bethankfulidonttakeitall}.
Furthermore,
\begin{eqnarray*}
S_{N/b_I}^P(\mathbf{0})
& = &
C_1(\log (N/b_I))^L + \bigo((\log (N/b_I))^{L-1})\\
& = &
C_1(\log N)^L + \bigo((\log N)^{L-1})
\end{eqnarray*}
It follows that the contribution to~$\pi_\alpha(N)$ from~\eqref{ifyoutrytosit} is
\begin{eqnarray*}
C_2
\left(\sum_{I\subset J}\frac{(-1)^{\vert I\vert}}{c_I}\right)
(\log N)^L + \bigo((\log N)^{L-1}),
\end{eqnarray*}
where $C_2>0$ and also~$\sum_{I\subset J}(-1)^{\vert I\vert}/c_I>0$.

Since there is at least
one term of the form~\eqref{ifyoutrytosit} contributing to~$\pi_\alpha(N)$ for which~$L=K$,
this completes the proof.

\subsection{Proof of Theorem~\ref{wheniwakeupearlyinthemorning}(2)}

Recall that in this case, for each term of the form~\eqref{ifyoutrytosit} contributing
to~$\pi_\alpha(N)$, $\mathbf{n}=(n_p)$ satisfies
\[
\max\{n_p:p\in P\}\leqslant\sigma.
\]
Moreover, since~$\sigma$ is a simple pole,
any term with $\max\{n_p:p\in P\}=\sigma$ has at most one of
the corresponding~$n_p$ equal to $\sigma$. The upper bound now follows from
Lemma~\ref{bethankfulidonttakeitall}.

For the lower bound we use more elementary methods.
The orbits for these systems mainly accumulate
at specific lengths, and summing over orbits
of those lengths suffices. Recall that~$\orbit_{\alpha}(n)$
is related to the number of periodic
points by~\eqref{equation:moebius_inversion_formula}
where~$\fix_{\alpha}(n)$ is given by the
formula~\eqref{Youdontknowhowluckyyouluckyyouareboy}
with~$S=T$. By~\eqref{manihadadreadfulflight},
there is a prime~$q\in Q$ such that
\begin{equation}\label{declarethepenniesonyoureyes}
\sum_{v\in U}r_v=\sigma+1,
\end{equation}
where~$U=\{v\in T\mid v\divides q\}$.

Using the notation from Section~\ref{sectionproofofrationality},
let~$\morder=\lcm\{\morder_v\mid v\in U\}$. Notice
that~$\gcd(q,\morder)=1$ since~$\morder_v$
divides~$q^{r_v}-1$ for~$v\in U$. Using basic properties of the
the M{\"o}bius function,
\begin{eqnarray}
\orbit_{\alpha}(\morder q^e)&=&\frac{1}{\morder q^e}
\sum_{e'=0}^{e}\sum_{d\divides\morder}
\mu(q^{e-e'}\morder/d)f_T(dq^{e'})\nonumber\\
&=&\frac{1}{\morder q^e}\sum_{d\divides\morder}
\mu(\morder/d)\left(
f_T(dq^e)-f_T(dq^{e-1})
\right)\nonumber\\
&=&\frac{1}{\morder q^e}\left(
f_T(\morder q^e)-f_T(\morder q^{e-1})
\right)
\nonumber\\
& &
\medspace
\medspace\medspace\medspace
\medspace\medspace\medspace
\medspace\medspace\medspace
+
\frac{1}{\morder q^e}
\sum_{d\divides\morder,d<\morder}
\mu(\morder/d)\left(
f_T(dq^e)-f_T(dq^{e-1})
\right).\nonumber
\end{eqnarray}

By \eqref{declarethepenniesonyoureyes} and
Lemma~\ref{fundamental_evaluation_lemma},
there are constants~$D,E>0$ such that
\[
f_T(\morder q^e)=Dq^{(\sigma+1)e}
\qquad\mbox{ and }\qquad
f_T(dq^e)\le Eq^{\sigma e}
\]
for all~$d<\morder$ and large enough~$e$. Now,
\begin{eqnarray*}
\orbit_{\alpha}(\morder q^e)
&\ge&
\frac{1}{\morder q^e}\left(\vphantom{\sum}
\left(
Dq^{(\sigma+1)e}-Dq^{(\sigma+1)(e-1)}
\right)
-2E\morder q^{\sigma e}
\right)\\
& = &
\frac{D}{\morder}\left(1-{q^{-(\sigma+1)}}\right)q^{\sigma e}
-2E q^{(\sigma-1) e},
\end{eqnarray*}
so there are constants $C_1\ge 0$ and $C_2,C_3,C_4>0$ such that
\begin{eqnarray*}
\pi_{\alpha}(N)
& \ge &
\left(\sum_{e=0}^{\lfloor\log_q N/\morder\rfloor}\orbit_{\alpha}
(\morder q^e)\right)-C_1\\
& \ge &
\left(\sum_{e=0}^{\lfloor\log_q N/\morder\rfloor}
C_2q^{\sigma e}-C_3q^{(\sigma-1)e}\right)-C_1\\
& \ge &
C_4 N^{\sigma},
\end{eqnarray*}
for all large~$N$.

\subsection{Proof of Theorem~\ref{wheniwakeupearlyinthemorning}(3)}

We begin with an example that illuminates some of the issues that arise in this case.

\begin{example}\label{theend?}
Consider~$M=\mathbb{Z}_{(3)}^2\times \mathbb{Z}_{(5)}^2$,~$X
=\widehat{M}$ and the the endomorphism~$\alpha:X\rightarrow X$
given by~$x\mapsto 2x$. The dynamical system~$(X,\alpha)$
has periodic point data identical to that of a
product of Example 4.2 and another duplicate
system. Consequently,
\[
\fix_{\alpha}(n)=
\begin{cases}
1&\mbox{if $n$ is odd};\\
9\cdot3^{2e_1}&\mbox{if }n=2\cdot k\cdot 3^{e_1},
2\notdivides k,3\notdivides k;\\
225\cdot 3^{2e_1}\cdot 5^{2e_2}&\mbox{if }n=4\cdot k\cdot3^{e_1}
\cdot 5^{e_2},3\notdivides k,
5\notdivides k.
\end{cases}
\]
Following a similar method to that used in Example~\ref{exampletwo},
the term of~$\dirichlet_\alpha$ dominating the growth of~$\pi_\alpha$ is
\begin{equation}\label{letmetellyouhowitwillbe}
\left(
\frac{1}{4^{z+1}}-\frac{1}{12^{z+1}}-\frac{1}{20^{z+1}}
+\frac{1}{60^{z+1}}
\right)
\frac{225}{(1-3^{1-z})(1-5^{1-z})}.
\end{equation}
Hence, consider
\[
S_{N/b}=\mathop{\sum_{\mathbf{e}\in
\mathbb{N}_0^2,}}_{3^{e_1}5^{e_2} \leqslant N/b}3^{e_1}5^{e_2}.
\]
Then, writing~$\{\cdot\}$ for the fractional part,
\begin{eqnarray*}
\frac{S_{N/b}}{N\log N}
& = &
\frac{1}{N\log N}
\mathop{\sum_{\mathbf{e}\in\mathbb{N}_0^2,}}_{3^{e_1}5^{e_2}\leqslant N/b}3^{e_1}5^{e_2}\\
& = &
\frac{1}{N\log N}
\sum_{e_1=0}^{\lfloor\log_3 (N/b)\rfloor} 3^{e_1}
\sum_{e_2=0}^{\lfloor\log_5 (N/3^{e_1}b)\rfloor} 5^{e_1}\\
& = &
\frac{1}{N\log N}
\sum_{e_1=0}^{\lfloor\log_3 N/b\rfloor} 3^{e_1}
\left(\frac{5N}{4b}\,3^{-e_1} 5^{-\{\log_5(N/3^{e_1}b)\}}-\frac{1}{4}\right)\\
& = &
\frac{5}{4b\log N}
\sum_{e_1=0}^{\lfloor\log_3 N/b\rfloor}
5^{-\{\log_5 N/b - e_1 \log_5 3\}}
+\bigo(N^{-1}).
\end{eqnarray*}
Since the exponent in the inner sum is
uniformly distributed in the unit interval,
by Weyl's theorem~\cite{MR1511862},
\begin{eqnarray*}
\frac{1}{\log N}\sum_{e_1=0}^{\lfloor\log_3 N/b\rfloor}
5^{-\{\log_5 N/b - e_1 \log_5 3\}}
& \rightarrow &
\frac{1}{\log 3}
\int_{0}^{1}5^{-y}\,dy\\
& = &
\frac{4}{5\log 3\log 5}.
\end{eqnarray*}
Therefore,
\[
\frac{S_{N/b}}{N\log N}\rightarrow\frac{1}{b\log 3\log 5}.
\]
From the above and \eqref{letmetellyouhowitwillbe}, it follows that
\begin{eqnarray*}
\frac{\pi_\alpha(N)}{N\log N}
& \sim &
\frac{225}{N\log N}
\left(
\frac{S_{N/4}}{4}-\frac{S_{N/12}}{12}-\frac{S_{N/20}}{20}+\frac{S_{N/60}}{60}
\right)\\
& \sim &
\frac{225}{\log 3\log 5}\left(\frac{1}{4^2}-\frac{1}{12^2}-\frac{1}{20^2}+\frac{1}{60^2}\right)\\
& = &
\frac{12}{\log 3\log 5}.
\end{eqnarray*}
\end{example}

In the general case, the proof uses similar ideas,
with the main steps being equidistribution and
an inclusion-exclusion argument. As in the previous section,
for each term of the form~\eqref{ifyoutrytosit} contributing
to~$\pi_\alpha(N)$, $\mathbf{n}=(n_p)$ satisfies
\[
\max\{n_p:p\in P\}\leqslant\sigma,
\]
and if $\max\{n_p:p\in P\}=\sigma$ then
\[
|\{p\in P:n_p=\sigma\}|\leqslant K.
\]
Furthermore, there is at least one term with
\begin{equation}\label{youreworkingfornoonebutme}
\max\{n_p:p\in P\}=\sigma
\mbox{ and }
|\{p\in P:n_p=\sigma\}|=K
\end{equation}
In light of the upper bound provided by Lemma~\ref{bethankfulidonttakeitall}, terms
of this form give the dominant contribution to $\pi_\alpha(N)$. Thus,
to obtain the required asymptotic it suffices to prove that
\begin{equation}\label{itwastwentyyearsagotoday}
\frac{1}{N^\sigma(\log N)^{K-1}}
\sum_{I\subset J}\frac{(-1)^{\vert I\vert}}{c_I}
S_{N/b_I}^P(\mathbf{n})
\end{equation}
convergences to a positive constant when~$\mathbf{n}$
satisfies~\eqref{youreworkingfornoonebutme}.

Choose a prime~$q\in P$ with~$n_q=\sigma$ and set~$P'=P\setminus\{q\}$.
Write
\[
a_N=\frac{1}{N^{\sigma}(\log N)^{K-1}}S_{N/b}^{P}(\mathbf n).
\]
Then
\[
a_N
=
\frac{1}{N^{\sigma}(\log N)^{K-1}}
\sum_{\varphi_{P'}(\mathbf e)\le N/b}
\varphi_{P'}(\mathbf n'\mathbf e)
\sum_{e_q=0}^{\lfloor
\log_q(N/b\varphi_{P'}(\mathbf e))\rfloor}\negmedspace\negmedspace
q^{\sigma e_q}
\]
where~$\mathbf n'=(n_p)_{p\in P'}$.
Treating the inner sum as a geometric progression as
usual, we see that~$a_N$ is equal to
\[
\frac{C_1}{b^{\sigma}(\log N)^{K-1}}
\left(
\sum_{\varphi_{P'}(\mathbf e)\le N/b}
\varphi_{P'}(\mathbf n'\mathbf e)
\varphi_{P'}(\sigma\mathbf e)^{-1}
q^{-\sigma\{\log_q(N/b\varphi_{P'}(\mathbf e))\}}
\right)
+\Delta_N
\]
where~$C_1=1/(1-q^{-\sigma})$ and
\[
\Delta_N
=
\bigo\left(
\frac{1}{N^{\sigma}(\log N)^{K-1}}S_{N/b}^{P'}(\mathbf n')
\right)
=\bigo
\left(
\frac{1}{\log N}
\right)
\]
by Lemma~\ref{bethankfulidonttakeitall}.

Now let~$r\in P'$ be a prime with~$m_r=\sigma$
(such a prime exists by hypothesis) and set $W=P\setminus\{q,r\}$.
Then~$a_N$ is, up to~$\bigo(1/\log N)$,
\begin{eqnarray*}
\frac{C_1}{b^{\sigma}(\log N)^{K-1}}
\sum_{\varphi_W(\mathbf e)\le N/b}
\varphi_W(\mathbf m\mathbf e)
\sum_{e_r=0}^{\lfloor
\log_r(N/b\varphi_W(\mathbf e))
\rfloor}
q^{-\sigma\{
\log_q(N/b\varphi_W(\mathbf e)r^{e_r})
\}}
\end{eqnarray*}
where $\mathbf m=(m_p)\in\mathbb{Z}^W$ is given by
\[
m_p=
\begin{cases}
n_p-\sigma&\mbox{if }n_p<\sigma,\\
0&\mbox{if }n_p=\sigma.
\end{cases}
\]
For $\varphi_W(\mathbf e)\leqslant x$, set
\[
\mathcal{I}_x(\mathbf e)=
\frac{1}{\log_r(x/\varphi_W(\mathbf e))}
\sum_{e_r=0}^{\lfloor\log_r(x/\varphi_W(\mathbf e))\rfloor}
q^{-{\sigma\{\log_q(x/\varphi_W(\mathbf e)r^{e_r})\}}}
\]
and
\[
\mathcal{J}_x(\mathbf{e})=
\varphi_W(\mathbf m\mathbf e)\log(x/\varphi_W(\mathbf e)).
\]
Then, again up to~$\bigo(1/\log N)$,~$a_N$ is
\[
\frac{C_2}{b^{\sigma}(\log N)^{K-1}}
\sum_{\varphi_W(\mathbf e)\le N/b}
\mathcal{J}_{N/b}(\mathbf{e})
\mathcal{I}_{N/b}(\mathbf{e}),
\]
where $C_2=C_1/\log r$. Since~$r\neq q$ the circle rotation by~$\log_qr$ is
uniquely ergodic, so there is uniform convergence for
continuous functions in the ergodic theorem
(see Oxtoby~\cite[\S5]{MR0047262}
for example). Hence, given~$\epsilon>0$
there is an~$M$ such that whenever~$N/b\varphi_W(\mathbf e)\ge M$,
\[
\mathcal{I}_{N/b}
(\mathbf e)=\int_{0}^{1}
q^{-\sigma y}\,dy+\delta_{N/b}(\mathbf e),
\]
where~$|\delta_{N/b}(\mathbf e)|<\epsilon$.
For~$N/b\varphi_W(\mathbf e)<M$ (that is in the range $N/bM<\varphi_W(\mathbf e)\leqslant N/b$) there exists a uniform constant~$D$
for which
\[
\mathcal{I}_{N/b}(\mathbf e)=\int_{0}^{1}
q^{-\sigma y}\,dy+\beta_{N/b}(\mathbf e)
\]
where~$|\beta_{N/b}(\mathbf e)|<D$.

Let
\begin{eqnarray*}
b_N
&=&
\sum_{\varphi_W(\mathbf e)\le N/b}
\mathcal{J}_{N/b}(\mathbf{e}),\\
c_N
&=&
\sum_{\varphi_W(\mathbf e)\le N/bM}
\mathcal{J}_{N/b}(\mathbf{e})
\delta_{N/b}(\mathbf e),\mbox{ and }\\
d_N
&=&
\sum_{N/bM<\varphi_W(\mathbf e)\le N/b}
\mathcal{J}_{N/b}(\mathbf{e})
\beta_{N/b}(\mathbf e).
\end{eqnarray*}
Then
\[
a_N=\frac{1}{b^{\sigma}}(\log N)^{1-K}(C_3b_N+C_2c_N+C_2d_N)+\bigo\left(\frac{1}{\log N}\right),
\]
where $C_3>0$ is independent of~$b$.

\begin{lemma}\label{geeitsgoodtobebackhome}
We have the following estimates:
\begin{enumerate}
\item $(\log N)^{1-K}b_N$ converges to a constant~$C_4 \geqslant 0$
that is independent of~$b$;
\item $(\log N)^{1-K}|c_N|\le C_5\epsilon$; and
\item $(\log N)^{1-K}d_N$ converges to zero as~$N\to\infty$.
\end{enumerate}
\end{lemma}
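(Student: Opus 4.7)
The plan is to reduce each of the three estimates to the asymptotic behaviour of $S_x^W(\mathbf{m})$ and the weighted variants $T_x^W(\mathbf{m},\mathbf{d},k)$ controlled by Lemma~\ref{nowmyadviceforthosewhodie}. Set $\overline{P}=\{p\in W:m_p<0\}$, so that $m_p=0$ precisely on $W\setminus\overline{P}$ and $|W\setminus\overline{P}|=K-2$, since $K$ counts the primes $p\in P$ with $n_p=\sigma$ and two of these, namely $q$ and $r$, have been removed. Note also that whenever $\varphi_W(\mathbf{e})\le N/b$ the factor $\log(N/b\varphi_W(\mathbf{e}))$ is non-negative, so $\mathcal{J}_{N/b}(\mathbf{e})\ge 0$.

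Parts~(2) and~(3) are short. For part~(3), the range $N/bM<\varphi_W(\mathbf{e})\le N/b$ forces the logarithmic factor in $\mathcal{J}_{N/b}$ to satisfy $\log(N/b\varphi_W(\mathbf{e}))\le\log M$, a uniform constant, so $|d_N|\le D\log M\cdot S_{N/b}^W(\mathbf{m})$; Lemma~\ref{bethankfulidonttakeitall} (with $m=0$ and $j=K-2$) gives $S_{N/b}^W(\mathbf{m})=\bigo((\log N)^{K-2})$, which the $(\log N)^{1-K}$ normalisation swallows. For part~(2), the non-negativity of $\mathcal{J}_{N/b}$ throughout the range $\varphi_W(\mathbf{e})\le N/bM$ yields
\[|c_N|\le\epsilon\sum_{\varphi_W(\mathbf{e})\le N/bM}\mathcal{J}_{N/b}(\mathbf{e})\le\epsilon\,b_N,\]
and a uniform bound $b_N\le C(\log N)^{K-1}$ supplied by part~(1) then delivers $(\log N)^{1-K}|c_N|\le C\epsilon$.

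The substantive step is part~(1). Expanding $\log(N/b\varphi_W(\mathbf{e}))=\log(N/b)-\sum_{p\in W}e_p\log p$ decomposes $b_N$ as
\[b_N=(\log(N/b))^{K-1}\left[T_{N/b}^W(\mathbf{m},\mathbf{0},K-2)-\sum_{p\in W}(\log p)\,T_{N/b}^W(\mathbf{m},\mathbf{1}_p,K-1)\right],\]
where $\mathbf{1}_p$ denotes the indicator vector at $p$ and each $T$ satisfies the balance hypothesis $k=\mathbf{d}\cdot\mathbf{1}+|W\setminus\overline{P}|$ of Lemma~\ref{nowmyadviceforthosewhodie}. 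Whenever $p\in\overline{P}$ the vector $\mathbf{1}_p$ differs from $\widetilde{\mathbf{1}_p}=\mathbf{0}$, so the estimate~\eqref{illtaxyourfeet} downgrades the corresponding $T$ to $\bigo(1/\log N)$ and that term contributes only $\bigo((\log N)^{K-2})$ to $b_N$. For the remaining terms (the $\mathbf{d}=\mathbf{0}$ piece and $\mathbf{1}_p$ with $p\in W\setminus\overline{P}$), I would apply Lemma~\ref{nowmyadviceforthosewhodie} iteratively along the primes of $W\setminus\overline{P}$, peeling off one variable at a time while preserving the sparsity and balance conditions, until the recursion terminates on a set $W^*\subseteq\overline{P}$ with $\mathbf{d}^*=\mathbf{0}$ and $k^*=0$; the residual sum $\sum_{\mathbf{e}\in\mathbb{N}_0^{W^*}}\varphi_{W^*}(\mathbf{m}\mathbf{e})$ converges absolutely as a product of geometric series, since $m_p<0$ on $W^*$. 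Assembling the pieces and using $\log(N/b)=\log N+\bigo(1)$ gives
\[b_N=C_4(\log N)^{K-1}+\bigo((\log N)^{K-2}),\]
with $C_4$ an explicit combination of these limiting constants. Independence of $b$ is immediate, and $C_4\ge 0$ follows because $b_N$ is a sum of manifestly non-negative terms.

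The main obstacle is the bookkeeping inside the iteration of Lemma~\ref{nowmyadviceforthosewhodie}: at each stage one must confirm that both the sparsity condition $\mathbf{d}(h)=\widetilde{\mathbf{d}(h)}$ and the balance $k_h=\mathbf{d}(h)\cdot\mathbf{1}+|W'\setminus\overline{P}|$ propagate to the next step, and that the various error terms produced (either from primes in $\overline{P}$ or from the $\bigo(1/\log x)$ remainders in the lemma itself) accumulate only to lower-order $\bigo((\log N)^{K-2})$ corrections. A subtle point is that Lemma~\ref{nowmyadviceforthosewhodie} only eliminates primes in $W\setminus\overline{P}$, so the induction halts on $W^*=\overline{P}$ rather than on $\varnothing$; one must make the separate observation that the residual sum over $\mathbb{N}_0^{\overline{P}}$ converges, which is immediate from $m_p<0$ there.
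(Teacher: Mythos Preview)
Your proposal is correct, and for parts~(1) and~(2) it follows essentially the same route as the paper: expand $\log(N/b\varphi_W(\mathbf e))$, recognise the pieces as instances of $T_x^W(\mathbf m,\mathbf d,k)$, discard those with $\mathbf d\neq\widetilde{\mathbf d}$ via~\eqref{illtaxyourfeet}, and iterate Lemma~\ref{nowmyadviceforthosewhodie} down to $W^*=\overline{P}$ where the residual sum is a convergent product of geometric series. (A small remark: the recursion necessarily terminates at $W^*=\overline{P}$, not merely $W^*\subseteq\overline{P}$, since at termination the balance and sparsity conditions force $\mathbf d=\mathbf 0$ and $k=0$.)

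For part~(3) your argument is genuinely different from, and cleaner than, the paper's. You observe directly that in the tail range $N/bM<\varphi_W(\mathbf e)\le N/b$ the logarithmic factor satisfies $\log(N/b\varphi_W(\mathbf e))\le\log M$, whence $|d_N|\le D\log M\cdot S_{N/b}^W(\mathbf m)$, and Lemma~\ref{bethankfulidonttakeitall} with $m=0$, $j=K-2$ finishes. The paper instead writes the tail sum of $\mathcal J_{N/b}$ as $b_N-b_{N/M}$ plus a correction term, and then appeals to the convergence already established in part~(1) to see that $(\log N)^{1-K}(b_N-b_{N/M})\to 0$. Your approach avoids this dependency on part~(1) and is self-contained; the paper's approach, by contrast, makes the cancellation between the two ends of the tail more visible but at the cost of a longer argument.
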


Assuming Lemma~\ref{geeitsgoodtobebackhome} for now and setting $C_6=C_3C_4$,
we deduce that
\begin{equation}\label{theukrainegirlsreallyknowckmeout}
a_N\longrightarrow\frac{C_6}{b^{\sigma}}\mbox{ as }
N\to\infty
\end{equation}
This can be used to show that~\eqref{itwastwentyyearsagotoday} converges to
a positive constant, therefore completing the proof.
To see this, first note that if $C_6=0$ then~\eqref{theukrainegirlsreallyknowckmeout}
contradicts the lower bound provided by Lemma~\ref{bethankfulidonttakeitall}, therefore~$C_6>0$.
Furthermore,~\eqref{itwastwentyyearsagotoday} converges to
\begin{eqnarray*}
C_6\sum_{I\subset J}\frac{(-1)^{\vert I\vert}}{c_Ib_I^{\sigma}}
&=&
C_6\sum_{I\subset J}\frac{(-1)^{\vert I\vert}}{c_I(Bc_I)^{\sigma}}\\
&=&
\frac{C_6}{B^\sigma}
\sum_{I\subset J'}\frac{(-1)^{\vert I\vert}}
{\lcm\{i\mid i\in I\}}\nonumber\\
&>&0,\nonumber
\end{eqnarray*}
where~$J'=\{i^{\sigma+1}\mid i\in J\}$.

Finally we turn to the postponed proof of the lemma.

\begin{proof}[Proof of Lemma~\ref{geeitsgoodtobebackhome}]
To prove (1), first note we may write
\begin{eqnarray*}
(\log N)^{1-K}b_N
& = &
\frac{1}{(\log(N/b) + \log b)^{K-1}}\,b_N\\
& = &
(1+\Delta_N)^{-1}(\log (N/b))^{1-K}b_N
\end{eqnarray*}
where $\Delta_N=\bigo((\log N)^{-1})$. Therefore, it suffices to prove
\[
(\log (N/b))^{1-K}b_N\rightarrow C_4.
\]
Write $x=N/b$ and consider
\begin{eqnarray*}
(\log (N/b))^{1-K}b_N
& = &
(\log x)^{1-K}
\sum_{\varphi_W(\mathbf e)\le x}\mathcal{J}_x(\mathbf{e})\\
& = &
(\log x)^{1-K}
\sum_{\varphi_W(\mathbf e)\le x}
\varphi_W(\mathbf m\mathbf e)\log(x/\varphi_W(\mathbf e))\\
& = &
(\log x)^{1-K}
\sum_{\varphi_W(\mathbf e)\le x}
\varphi_W(\mathbf m\mathbf e)(\log x - \mathbf{e}\cdot(\log p)_{p\in W}).
\end{eqnarray*}
Therefore, $(\log (N/b))^{1-K}b_N$ may be written as a sum of $|W|+1$ expressions of the form
\[
(\log x)^{-k}
\sum_{\varphi_W(\mathbf e)\le x}
\varphi_W(\mathbf m\mathbf e)\mathbf{e}^{\mathbf{d}},
\]
where $\mathbf{d}\in\mathbb{N}_0^W$ satisfies
\begin{equation}\label{theactyouveknown}
k - \mathbf{d}\cdot \mathbf{1} = K-2
= |W\setminus\overline{P}|,
\end{equation}
with $\overline{P}$ defined by \eqref{guaranteedtoraiseasmile}.

So, we must consider an expression of the form $T_x^W(\mathbf{m},\mathbf{d},k)$,
defined by~\eqref{sgtpeppertaughtthebandtoplay}. Let~$\widetilde{\mathbf{d}}$ be defined by~\eqref{somayiintroducetoyou}.
If~$\mathbf{d}\neq\widetilde{\mathbf{d}}$
then~$\widetilde{\mathbf{d}}\cdot\mathbf{1}<\mathbf{d}\cdot\mathbf{1}$.
Hence by~\eqref{illtaxyourfeet} and~\eqref{theactyouveknown},
\[
T_x^W(\mathbf{m},\mathbf{d},k) = \bigo\left(\frac{1}{\log x}\right).
\]
Thus, we need only consider those terms~$T_x^W(\mathbf{m},\mathbf{d},k)$
with~$\mathbf{d}=\widetilde{\mathbf{d}}$.
If $\overline{P}=\varnothing$,
then proceeding as in~Section~\ref{theyvebeengoinginandoutofstyle}, an induction using Lemma~\ref{nowmyadviceforthosewhodie} shows that
\[
T_x^W(\mathbf{m},\mathbf{d},k)=C_7+\bigo\left(\frac{1}{\log x}\right),
\]
where~$C_7$ is a constant independent of~$x$. If~$\overline{P}\neq\varnothing$ then again by induction using Lemma~\ref{nowmyadviceforthosewhodie}, up to an error of $\bigo(1/\log x)$, $T_x^W(\mathbf{m},\mathbf{d},k)$ may be
written as a linear combination of expressions of the form
\[
T_x^{\overline{P}}(\mathbf{m},\mathbf{0},0)=
\sum_{\varphi_{\overline{P}}(\mathbf{e})\leqslant x}
\varphi_{\overline{P}}(\mathbf{m}'\mathbf{e})
\]
where $\mathbf{m}'=(m_p)_{p\in\overline{P}}$. Moreover,
\[
\prod_{p\in\overline{P}}
\sum_{p=0}^{\lfloor|\overline{P}|^{-1}\log_p x\rfloor}p^{m_pe_p}
\leqslant
T_x^{\overline{P}}(\mathbf{m},\mathbf{0},0)
\leqslant
\prod_{p\in\overline{P}}
\sum_{p=0}^{\lfloor\log_p x\rfloor}p^{m_pe_p},
\]
so
\[
T_x^{\overline{P}}(\mathbf{m},\mathbf{0},0)\longrightarrow\prod_{p\in\overline{P}}\frac{1}{1-p^{m_p}}.
\]
as $x\rightarrow\infty$.

Thus, $(\log (N/b))^{1-K}b_N$ converges to a constant $C_4$ independent of~$b$ which is non-negative, as
each term of the sequence is non-negative.

To prove (2), simply note that $|c_N|\leqslant b_N\varepsilon$. To prove (3), first notice
\begin{eqnarray*}
|d_N|
& \leqslant &
D(\log N)^{1-K}
\sum_{N/bM<\varphi_W(\mathbf e)\le N/b}
\mathcal{J}_{N/b}(\mathbf{e})\\
& = &
D(\log N)^{1-K}
\left(
b_N -
\sum_{\varphi_W(\mathbf e)\le N/bM}
\mathcal{J}_{N/b}(\mathbf{e})
\right).
\end{eqnarray*}
But
\[
\mathcal{J}_{N/b}(\mathbf{e})
=
\mathcal{J}_{N/bM}(\mathbf{e})
+(\log M)\varphi_W(\mathbf{m}\mathbf{e}),
\]
so
\begin{eqnarray*}
|d_N|
& \leqslant &
D(\log N)^{1-K}(b_N-b_{N/M})
-
D(\log M)(\log N)^{1-K}
S_{N/bM}^W(\mathbf{m})\\
& = &
\frac{D}{(\log N)^{1-K}}(b_N-b_{N/M})
+
\bigo\left(\frac{1}{\log N}\right),
\end{eqnarray*}
by Lemma~\ref{bethankfulidonttakeitall}. Since~$(\log N)^{1-K}b_N$
and~$(\log N)^{1-K}b_{N/M}$ both converge to~$C_4$, (3) follows.
\end{proof}


\begin{thebibliography}{10}

\bibitem{MR0054079}
S.~Agmon.
\newblock Complex variable {T}auberians.
\newblock {\em Trans. Amer. Math. Soc.}, 74:444--481, 1953.

\bibitem{MR1461206}
V.~Chothi, G.~Everest, and T.~Ward.
\newblock {$S$}-integer dynamical systems: periodic points.
\newblock {\em J. Reine Angew. Math.}, 489:99--132, 1997.

\bibitem{emsw}
G.~Everest, R.~Miles, S.~Stevens, and T.~Ward.
\newblock Orbit-counting in non-hyperbolic dynamical systems.
\newblock {\em J. Reine Angew. Math.}, 2007.
\newblock To appear.

\bibitem{MR2180241}
G.~Everest, V.~Stangoe, and T.~Ward.
\newblock Orbit counting with an isometric direction.
\newblock In {\em Algebraic and topological dynamics}, volume 385 of {\em
  Contemp. Math.}, pages 293--302. Amer. Math. Soc., Providence, RI, 2005.

\bibitem{MR1209847}
M.~J. Feigenbaum.
\newblock Scaling function dynamics.
\newblock In {\em Chaos, order, and patterns (Lake Como, 1990)}, volume 280 of
  {\em NATO Adv. Sci. Inst. Ser. B Phys.}, pages 1--23. Plenum, New York, 1991.
\newblock Lecture notes prepared by Zoltan Kov\'acs.

\bibitem{MR0185094}
G.~H. Hardy and M.~Riesz.
\newblock {\em The general theory of {D}irichlet's series}.
\newblock Cambridge Tracts in Mathematics and Mathematical Physics, No. 18.
  Stechert-Hafner, Inc., New York, 1964.

\bibitem{MR0174544}
E.~Hlawka.
\newblock Discrepancy and uniform distribution of sequences.
\newblock {\em Compositio Math.}, 16:83--91 (1964), 1964.

\bibitem{miles_periodic_points}
R.~Miles.
\newblock Periodic points of endomorphisms on solenoids and related groups.
\newblock Preprint, 2006.

\bibitem{miles_zeta}
R.~Miles.
\newblock Zeta functions for elements of entropy rank one actions.
\newblock {\em Ergodic Theory Dynam. Systems}, 27:567--582, 2007.

\bibitem{MR0047262}
J.~C. Oxtoby.
\newblock Ergodic sets.
\newblock {\em Bull. Amer. Math. Soc.}, 58:116--136, 1952.

\bibitem{MR710244}
W.~Parry.
\newblock An analogue of the prime number theorem for closed orbits of shifts
  of finite type and their suspensions.
\newblock {\em Israel J. Math.}, 45(1):41--52, 1983.

\bibitem{MR727704}
W.~Parry and M.~Pollicott.
\newblock An analogue of the prime number theorem for closed orbits of {A}xiom
  {A} flows.
\newblock {\em Ann. of Math. (2)}, 118(3):573--591, 1983.

\bibitem{MR1085356}
W.~Parry and M.~Pollicott.
\newblock Zeta functions and the periodic orbit structure of hyperbolic
  dynamics.
\newblock {\em Ast\'erisque}, (187-188):268, 1990.

\bibitem{stangoethesis}
V.~Stangoe.
\newblock {\em Orbit counting far from hyperbolicity}.
\newblock PhD thesis, University of East Anglia, 2004.

\bibitem{MR1442260}
R.~P. Stanley.
\newblock {\em Enumerative combinatorics. {V}ol. 1}, volume~49 of {\em
  Cambridge Studies in Advanced Mathematics}.
\newblock Cambridge University Press, Cambridge, 1997.
\newblock With a foreword by Gian-Carlo Rota, Corrected reprint of the 1986
  original.

\bibitem{MR1139101}
S.~Waddington.
\newblock The prime orbit theorem for quasihyperbolic toral automorphisms.
\newblock {\em Monatsh. Math.}, 112(3):235--248, 1991.

\bibitem{MR1702897}
T.~Ward.
\newblock Dynamical zeta functions for typical extensions of full shifts.
\newblock {\em Finite Fields Appl.}, 5(3):232--239, 1999.

\bibitem{MR1511862}
H.~Weyl.
\newblock \"{U}ber die {G}leichverteilung von {Z}ahlen mod. {E}ins.
\newblock {\em Math. Ann.}, 77(3):313--352, 1916.

\end{thebibliography}

\end{document}